\renewcommand\eqref[1]{(\ref{#1})}
\numberwithin{equation}{section}
\newcommand{\Rn}{{\mathbb R}^n}
\numberwithin{equation}{section}
\newtheorem{theorem}{Theorem}[section]
\newtheorem{proposition}[theorem]{Proposition}
\newtheorem{lemma}[theorem]{Lemma}
\newtheorem{corollary}[theorem]{Corollary}
\newtheorem{remark}[theorem]{Remark}
\newtheorem{remarks}[theorem]{Remark}
\newtheorem{definition}[theorem]{Definition}
\newcommand{\be}{\begin{equation}}
\newcommand{\ee}{\end{equation}}
\newcommand{\R}{\mathbb R}
\newcommand{\C}{\mathbb C}
\newcommand{\N}{\mathbb N}
\newcommand{\ii }{{\rm i} }
\newcommand{\Opt}{{\rm Op}_\tau}
\newcommand{\Op}{{\rm Op}}
\begin{document}

\title[$\tau$-quantizations of pseudo-differential operators]
{Pseudo-differential operators with nonlinear quantizing functions}

\author[Massimiliano Esposito]{Massimiliano Esposito}
\address{
  Massimiliano Esposito:
   \endgraf
    Department of Mathematics
  \endgraf
  Imperial College London
  \endgraf
  180 Queen's Gate, London, SW7 2AZ
  \endgraf
  United Kingdom
  \endgraf
  {\it E-mail address} {\rm m.esposito14@imperial.ac.uk}
 }

\author[Michael Ruzhansky]{Michael Ruzhansky}
\address{
  Michael Ruzhansky:
  \endgraf
  Department of Mathematics
  \endgraf
  Imperial College London
  \endgraf
  180 Queen's Gate, London, SW7 2AZ
  \endgraf
  United Kingdom
  \endgraf
  {\it E-mail address} {\rm m.ruzhansky@imperial.ac.uk}
  }

\thanks{The second author was supported by the EPSRC
grant EP/R003025/1 and by the Leverhulme Grant RPG-2017-151. No new data was collected or generated during the course of research.}

\date{\today}

\subjclass{35S05, 47G30, 43A70, 43A80, 22E25.} \keywords{Pseudo-differential operators, quantizations, Weyl quantization, Heisenberg group}

\begin{abstract}
In this paper we develop the calculus of pseudo-differential operators corresponding to the quantizations of the form
$$
Au(x)=\int_{\mathbb{R}^n}\int_{\mathbb{R}^n}e^{\ii(x-y)\cdot\xi}\sigma(x+\tau(y-x),\xi)u(y)dyd\xi,
$$
where $\tau:\Rn\to\Rn$ is a general function. In particular, for the linear choices $\tau(x)=0$, $\tau(x)=x$, and $\tau(x)=\frac{x}{2}$ this covers the well-known Kohn-Nirenberg, anti-Kohn-Nirenberg, and Weyl quantizations, respectively. Quantizations of such type appear naturally in the analysis on nilpotent Lie groups for polynomial functions $\tau$ and here we investigate the corresponding calculus in the model case of $\Rn$. We also give examples of nonlinear $\tau$ appearing on the polarised and non-polarised Heisenberg groups.
\end{abstract}

\maketitle

\tableofcontents

\section{Introduction}

There are many ways (quantizations) of associating the operator to a function of variables $(x,\xi)$ on the phase space. In this paper we are going to discuss a generalisation of the well-known quantization procedures:
\begin{eqnarray*}
\mbox{Weyl}\qquad\frac{1}{(2\pi)^n}\int_{\mathbb{R}^n}\int_{\mathbb{R}^n}e^{\ii(x-y)\cdot\xi}\sigma(\frac{x+y}{2},\xi)u(y)dyd\xi,\\
\mbox{Kohn-Nirenberg}\qquad\frac{1}{(2\pi)^n}\int_{\mathbb{R}^n}\int_{\mathbb{R}^n}e^{\ii(x-y)\cdot\xi}\sigma(x,\xi)u(y)dyd\xi,\\
\mbox{Anti-Kohn-Nirenberg}\qquad\frac{1}{(2\pi)^n}\int_{\mathbb{R}^n}\int_{\mathbb{R}^n}e^{\ii(x-y)\cdot\xi}\sigma(y,\xi)u(y)dyd\xi.
\end{eqnarray*}
More specifically, we are going to study pseudo-differential operators of the form
\begin{eqnarray}\label{psuedo}
\frac{1}{(2\pi)^n}\int_{\mathbb{R}^n}\int_{\mathbb{R}^n}e^{\ii(x-y)\cdot\xi}\sigma(x+\tau(y-x),\xi)u(y)dyd\xi,
\end{eqnarray}
where $\tau$ is a function satisfying certain admissibility conditions. In particular, we can recover the above listed quantizations by taking $\tau(x)=\frac{x}{2}$ (Weyl), $\tau(x)=0$ (Kohn-Nirenberg), and $\tau(x)=x$ (anti-Kohn-Nirenberg). 
In the more general special case of a linear function $\tau(x)=s x$ with $0\leq s\leq 1$ we recover the quantizations analysed in detail by 
Shubin \cite{Schubin}. The quantizations with linear $\tau$ of the form $\tau(x)=Mx$ for some non-degenerate matrix $M$ have been also considered recently in the context of modulation \cite{Toft} or \cite{Bayer}, and Gelfand-Shilov spaces \cite{CT}.

In this paper we will consider two cases of $\tau$, with unbounded and with bounded derivatives, and also symbols satisfying inequalities of the form
\begin{equation}\label{STIMESIGMATAU}
|\partial_{x}^{\alpha}\partial_{y}^{\beta}\partial_{\xi}^{\gamma}\left[\sigma(x+\tau(y-x),\xi)\right]|\le C_{\alpha,\beta,\gamma}\langle\xi\rangle^{m-|\gamma|}\langle x-y\rangle^{d(|\alpha|+\beta|)},
\end{equation}
or of the form
\begin{equation}\label{STIMESIGMATAU2}
|\partial_{x}^{\alpha}\partial_{y}^{\beta}\partial_{\xi}^{\gamma}\left[\sigma(x+\tau(y-x),\xi)\right]|\le C_{\alpha,\gamma}\langle\xi\rangle^{m-|\gamma|},
\end{equation}
where $\alpha,\beta,\gamma\in\mathbb{N}^{n}$ are multi-indices, $m,d\in\mathbb{R}$ are numbers depending on the symbol $\sigma$ and on $\tau$, and $\langle\xi\rangle=(1+|\xi|^2)^{1/2}.$

We are going to develop a calculus for \eqref{psuedo}, i.e. we prove the adjoint, composition, and other formulae, make links between quantizations for different choices of $\tau$, and investigate different properties of operators of this kind. Moreover, we discuss the Calder\'on-Vaillancourt theorem, ellipticity and parametrix, and the G{\aa}rding inequality.

We may skip the constant $\frac{1}{(2\pi)^n}$ on most occasions but we have put it in \eqref{psuedo} to ensure that the constant $1$ is quantized into the identity operator.

The quantizations of the form \eqref{psuedo} appear naturally in the analysis on nilpotent Lie groups, in particular in the question of finding suitable analogues of the Weyl quantizations in the noncommutative setting. In \cite{MR}, a class of so-called symmetric quantizations was identified inheriting the important property for quantum physics, that the self-adjoint symbols are quantized into self-adjoint operators. When written in local coordinates, this reduces to quantizing the symbol in the form $\sigma(x+\tau(y-x),\xi)$ for suitable choices of non-linear functions $\tau$. 

For example, if $G$ is a locally compact unimodular group of type I, and $\tau:G\rightarrow G$ is a measurable function, general $\tau$-quantizations on $G$ were considered in \cite{MR} in the form
\begin{equation}\label{bilfred}
{\rm Op}^\tau(\sigma)u(x)=\int_G\!\Big(\int_{\widehat{G}}\,{\rm Tr}_\xi\Big[\xi(y^{-1}x)\sigma\big(x\tau(y^{-1}x)^{-1}\!,\xi\big)\Big]d\mu(\xi)\Big)u(y)dm(y),
\end{equation}
where $dm$ and $d\mu$ are the Haar and the Plancherel measures on $G$ and $\widehat{G}$, respectively. We refer to \cite{MR} for the details, but here we can say that 
it was shown in \cite[Proposition 4.3]{MR} that
${\rm Op}^\tau(\sigma)^*={\rm Op}^\tau(\sigma^*)$ if and only if $\tau(x)=\tau(x^{-1})x$ for all $x\in G$, and functions $\tau$ satisfying this condition were called symmetry functions. Moreover, it was also shown that if $G$ is an exponential group (i.e. the exponential mapping is a global diffeomorphism) then such symmetry functions always exist, e.g. given by
\begin{equation}\label{EQ:Mrsym}
\tau(x):=\int_{0}^{1}\exp[s\log x]ds.
\end{equation} 
In the case of $G=\Rn$ the quantization \eqref{bilfred} reduces to \eqref{psuedo}, and since both mappings $\exp$ and $\log$ are identities, formula \eqref{EQ:Mrsym} boils down (modulo signs) to $\tau(x)=\frac12 x$, yielding the usual Weyl quantization, so that real-valued (self-adjoint) symbols are quantized into self-adjoint operators. However, already on nilpotent Lie groups different from $\Rn$, since the group law is polynomial, the symmetry functions $\tau$ in \eqref{EQ:Mrsym} do not have to be linear. 

We give a further example for this construction in the setting of the Heisenberg group in Section \ref{SEC:heisenberg}, in particular, the symmetry function \eqref{EQ:Mrsym} on the Heisenberg group $\mathbb H\simeq \mathbb R^3$ is already nonlinear, taking the form
\eqref{EQ:tau-heis2}, namely,
\begin{equation}\label{EQ:tauheisex}
\tau(a,b,c)=\left(\frac{a}{2},\frac{b}{2},\frac{c}{2}+\frac{ab}{6}\right).
\end{equation} 
Consequently, we get the formula for the `midpoint' function $m(x,y)=x\tau(y^{-1}x)^{-1}$
from \eqref{EQ:midpoint} in the Weyl-type quantization \eqref{bilfred} as
$$
m((a_1,b_1,c_1),(a_2,b_2,c_2)) =\left(\frac{a_1+a_2}{2},\frac{b_1+b_2}{2},\frac{c_1+c_2}{2}
-\frac{(a_1-a_2)(b_1-b_2)}{6}
\right).
$$
Curiously, such a midpoint between $x$ and $x^{-1}$ is not the origin but
$$
m((a,b,c),(a,b,c)^{-1}) =\left(0,0,
-\frac{2ab}{3}
\right).
$$

Since such quantizations appear not studied even in the simplest settings, we took it as an incentive to analyse it in this paper first in the simplest classical setting of $\Rn$. This already yields some insights and further intuition into such quantizations.

\smallskip
Throughout this paper symbols will be mainly denoted with $\sigma$, while their associated $\tau$-quantization will be marked with a subscript $\tau$. Also, instead of writing inequalities like $|\partial_{x}^{\alpha}\partial_{\xi}^{\beta}\sigma(x,\xi)|\le C_{\alpha}\langle\xi\rangle^{m-|\beta|}$ we will often write $|\partial_{x}^{\alpha}\partial_{\xi}^{\beta}\sigma(x,\xi)|\lesssim_{\alpha} \langle\xi\rangle^{m-|\beta|}$ to denote a possible dependence on a constant $C_{\alpha}$.
Similarly, we will simply write $A\lesssim B$ if there is a constant $C>0$ such that $A\leq CB.$
For clarify, we will be also using the notation $\mathbb{N}_0=\mathbb N\cup\{0\}.$

\smallskip
The authors would like to thank Julio Delgado for discussions and for comments on the preliminary version of the manuscript.

\section{Admissible $\tau$-quantizations}
\label{SEC:2}

In this section we are going to introduce an admissible class of functions $\tau$ and the corresponding classes of symbols and amplitudes. 

\begin{definition}\label{TAO AMMISSIBILE}
A function $\tau:\mathbb{R}^{n}\rightarrow\mathbb{R}^{n}$ will be called admissible if $\tau\in C^{\infty}$ and $\tau(0)=0$, and if there exists $\mu\geq 0$ such that
\begin{equation}\label{EQ:admiss-cond}
\\|\partial_{x}^{\alpha}\tau(x)|\lesssim_{\alpha}\langle x\rangle^{\mu}  
\end{equation}
holds for all multi-indices $\alpha\in\mathbb{N}_0^{n}\backslash\{0\}$ and all $x\in\Rn.$ 
We will often say that in this case $\tau$ is admissible of order $\mu$. We will also often call $\tau$ the quantizing function.
\end{definition}

We are now going to introduce the class of symbols and amplitudes that we will use and the associated operators. 
Given an admissible $\tau$, we say that a smooth function $\sigma:\mathbb{R}^n\times \R^n  \rightarrow \C$ belongs to the class $S^{m}_{d,\tau}$, where $m$ and $d$ are real numbers, if for all multi-indices $\alpha$,$\beta$,$\gamma\geq 0$ and all $(x,y, \xi) \in \R^n\times\R^n \times \R^n$ we have
\begin{equation}\label{EQ:s-class}
|\partial_x^\alpha\partial_{y}^{\beta} \partial_\xi^\gamma 
\left[\sigma(x+\tau(y-x), \xi)\right]| \lesssim_{\alpha, \beta,\gamma} \langle  \xi \rangle^{m - |\gamma|}\langle x-y\rangle^{d\cdot(|\alpha|+|\beta|)}.
\end{equation} 
A typical situation would be to take $\sigma\in S^m$, that is, satisfying
$$
|\partial_x^\alpha\partial_\xi^\gamma 
\sigma(x, \xi)| \lesssim_{\alpha, \gamma} \langle  \xi \rangle^{m - |\gamma|}.
$$
Consequently, if $\tau$ is an admissible quantizing function of order $\mu$, then $\sigma\in S^m_{\mu,\tau}.$

We note that the derivatives in $y$ can be eliminated from the assumption \eqref{EQ:s-class}, namely, the class $S^{m}_{d,\tau}$ can be also characterised by the condition that for all multi-indices $\alpha, \gamma\geq 0$ and all $(x,y, \xi) \in \R^n\times\R^n \times \R^n$ we have
\begin{equation}\label{EQ:s-class2}
|\partial_x^\alpha \partial_\xi^\gamma 
\left[\sigma(x+\tau(y-x), \xi)\right]| \lesssim_{\alpha, \gamma} \langle  \xi \rangle^{m - |\gamma|}\langle x-y\rangle^{d|\alpha|}.
\end{equation}

Given $\sigma \in S^m_{d,\tau}$ and $u\in\mathcal{S}(\R^n)$ we define the operator $A_{\sigma,{\tau}} \equiv \Opt(\sigma)$ associated to $\sigma$ as
\begin{equation}\label{EQ:quant}
A_{\sigma,{\tau}} u(x) = \Opt(\sigma) u(x) := \int_{\R^n} \int_{\R^n} e^{\ii (x - y) \cdot \xi} \sigma(x+\tau(y-x), \xi) u(y) \,d y \, d \xi. 
\end{equation}
In this case we also say that $A_{\sigma,{\tau}}\in OP_\tau S^{m}_{d,\tau}.$
For $\tau(x)=0$ we have the usual Kohn-Nirenberg quantization, and we can abbreviate this case by writing $\Op=\Op_{\tau\equiv 0}.$

We also define the corresponding class of amplitudes $a(x, y, \xi)$.
Namely, a smooth function $a : \R^n \times \R^n \times \R^n \to \C$ belongs to the class $A^{m}_{d}$, where $m$ and $d$ are real numbers if for any $\alpha, \beta, \gamma \geq 0$ and for all  $(x, y, \xi) \in \R^n \times \R^n \times \R^n$ one has the inequalities
\begin{equation}\label{EQ:s-amp}
|\partial_x^\alpha \partial_y^\beta \partial_\xi^\gamma a(x, y, \xi)| \lesssim_{\alpha, \beta, \gamma} \langle \xi \rangle^{m - |\gamma|}\langle x-y\rangle^{d(|\alpha|+|\beta|)} .
\end{equation} 
Clearly the classes $S^{m}_{d,\tau}$  depend on $\tau$, but for simplicity we may sometimes suppress the letter $\tau$ from their notation as it should cause no confusion when $\tau$ is fixed.

Given $a \in A^m_d$ and $u\in\mathcal{S}({\mathbb{R}^n})$, we denote the operator  $A_{a} := \Op(a) $ associated to $a$ as
\begin{equation}\label{EQ:quant-amp}
A_{a} u(x) = \Op(a) u(x) :=  \int_{\R^n} \int_{\R^n} e^{\ii (x - y) \cdot \xi} a(x, y, \xi) u(y) \,d y \, d \xi.
\end{equation} 
In this case we also write $A_a\in OP A^m_d$.

\section{Calculus associated to $\tau$-quantization }
\label{SEC:3}

In this section we are going to show the calculus properties of general operators of the forms \eqref{EQ:quant} and \eqref{EQ:quant-amp}.
In particular we will prove that any operator of the form \eqref{EQ:quant-amp} is almost equivalent  to an operator of the form \eqref{EQ:quant}.
For this we will need the following preparatory basic statements.

We say that a function $a : \R^n \times \R^n \times \R^n \to \C$ belongs to the class $A^{m,k}$, where $m\in\R$ and $k\in\mathbb N$ if for all  $(x, y, \xi) \in \R^n \times \R^n \times \R^n$ one has the inequalities
\begin{equation}\label{EQ:s-amp-k}
|\partial_x^\alpha \partial_y^\beta \partial_\xi^\gamma a(x, y, \xi)| \lesssim_{\alpha, \beta, \gamma} \langle \xi \rangle^{m - |\gamma|}
\end{equation} 
for any multi-indices $|\alpha|, |\beta|\leq k$ and any $\gamma$.

\begin{lemma}\label{EQUIVALENZA}
If an amplitude operator $A_a$ is associated to the amplitude $a\in A^{m}_{d}$ then for any $k\in\mathbb N$ there exists $a_k\in A^{m,k}$ such that $A_a=A_{a_k}$.
\end{lemma}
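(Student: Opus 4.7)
The plan is to exchange growth in $\langle x-y\rangle$ for decay in $\langle\xi\rangle$ by integrating by parts in $\xi$, using the standard identity
\[
\langle x-y\rangle^{2N}\,e^{\ii(x-y)\cdot\xi}=(1-\Delta_\xi)^N e^{\ii(x-y)\cdot\xi}.
\]
Given $a\in A^{m}_{d}$ and $k\in\mathbb N$, I would choose an integer $N$ with $2N\geq 2dk$ (taking $N=0$ if $d\le 0$) and define
\[
a_k(x,y,\xi):=\langle x-y\rangle^{-2N}\,(1-\Delta_\xi)^N a(x,y,\xi).
\]

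To verify $a_k\in A^{m,k}$, I would differentiate using the Leibniz rule. Each $x$- or $y$-derivative falling on $\langle x-y\rangle^{-2N}$ does not worsen decay, i.e.\ $|\partial_x^{\alpha'}\partial_y^{\beta'}\langle x-y\rangle^{-2N}|\lesssim \langle x-y\rangle^{-2N}$. The factor $(1-\Delta_\xi)^N a$ is a finite combination of $\partial_\xi^{2\ell}a$ with $\ell\le N$, so its $(\alpha'',\beta'',\gamma)$-derivative is bounded by $\langle\xi\rangle^{m-|\gamma|}\langle x-y\rangle^{d(|\alpha''|+|\beta''|)}$ by the definition \eqref{EQ:s-amp} of $A^m_d$. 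Collecting these estimates yields
\[
|\partial_x^\alpha\partial_y^\beta\partial_\xi^\gamma a_k(x,y,\xi)|\lesssim \langle\xi\rangle^{m-|\gamma|}\langle x-y\rangle^{d(|\alpha|+|\beta|)-2N},
\]
and for $|\alpha|,|\beta|\le k$ the exponent of $\langle x-y\rangle$ is nonpositive by our choice of $N$, giving precisely \eqref{EQ:s-amp-k}.

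The identity $A_a=A_{a_k}$ is then a formal integration by parts in $\xi$: inserting the factor $\langle x-y\rangle^{-2N}(1-\Delta_\xi)^N$ in front of $e^{\ii(x-y)\cdot\xi}$ and moving $(1-\Delta_\xi)^N$ onto the amplitude produces $a_k$. Since the $\xi$-integral in \eqref{EQ:quant-amp} is oscillatory rather than absolutely convergent, the main technical point is to justify this manipulation rigorously. I would do this by the standard regularisation: pick $\chi\in C_c^\infty(\R^n)$ with $\chi(0)=1$, replace $a$ by $a(x,y,\xi)\chi(\varepsilon\xi)$, perform the integration by parts on the now absolutely convergent integral, and let $\varepsilon\to 0^+$ with dominated convergence (valid because after choosing $N$ large enough relative to $m$ the resulting amplitude $a_k$ is integrable in $\xi$, or equivalently by pairing against $u\in\mathcal S(\R^n)$ and exploiting its Schwartz decay). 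This identifies the two operators, completing the proof.
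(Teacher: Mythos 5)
Your proof is correct and is essentially the same as the paper's: both insert $\langle x-y\rangle^{-2N}(1-\Delta_\xi)^N$ via the identity $\mathcal{L}^N e^{\ii(x-y)\cdot\xi}=e^{\ii(x-y)\cdot\xi}$ with $\mathcal{L}=(1-\Delta_\xi)/(1+|x-y|^2)$, integrate by parts in $\xi$, and take $a_k=\langle x-y\rangle^{-2N}(1-\Delta_\xi)^N a$ with $N$ large relative to $dk$ (the paper writes $N>\mu k$, evidently meaning the growth parameter of the amplitude class). Your extra care with the oscillatory-integral justification and the Leibniz-rule verification only adds detail the paper leaves implicit.
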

If the above property holds we will sometimes say that $A_a$ {\em weakly belongs} to $A^m$, or that $A_a$ is weakly equivalent to an operator with an amplitude in $A^m$. 

For many questions this is actually as good as the strong identification, especially for obtaining results depending only on a finite number of derivatives of symbols (such as e.g. Calder\'on-Vaillancourt theorem, see Theorem \ref{THM:CV}, and many other results in the theory of pseudo-differential operators).

\proof
Let us denote
$$\mathcal{L}:=\frac{1-\Delta_{\xi}}{1+|x-y|^2},$$
where $\Delta_\xi$ is the usual Laplacian on $\Rn$. For any $N\in\mathbb N$, applying $\mathcal{L}^{N}$ to $e^{\ii(x-y)\cdot\xi}$ we have
$\mathcal{L}^{N}e^{\ii(x-y)\cdot\xi}=e^{\ii(x-y)\cdot\xi}.$
Substituting this inside $A_a$ and integrating by parts we get
\begin{align*}
A_af(x) & =\int_{\mathbb{R}^{n}}\int_{\mathbb{R}^{n}}e^{\ii(x-y)\cdot\xi}a(x,y,\xi)f(y)dyd\xi\\
&=\int_{\mathbb{R}^{n}}\int_{\mathbb{R}^{n}}\mathcal{L}^{N}e^{\ii(x-y)\cdot\xi}a(x,y,\xi)f(y)dyd\xi\\
&=\int_{\mathbb{R}^{n}}\int_{\mathbb{R}^{n}}e^{\ii(x-y)\cdot\xi}\frac{(1-\Delta_{\xi})^{N}}{(1+|x-y|^2)^N}a(x,y,\xi)f(y)dyd\xi.
\end{align*}
Then we can take $a_k(x,y,\xi):=\frac{(1-\Delta_{\xi})^{N}}{(1+|x-y|^2)^N}a(x,y,\xi)$ for 
any $N>\mu k$.
\endproof

Let us also record the following property which will be of use later. As usual, for $w\in\Rn$ and a multi-index $\gamma$ we will be using the notation $w^\gamma=w_1^{\gamma_1}\cdots w_n^{\gamma_n}.$

\begin{lemma}\label{PRODPOLINOMI}
Given a function $\tau :\mathbb{R}^{n}\rightarrow\mathbb{R}^{n}$ of the form
\begin{equation}\label{polinomio}
\tau(w)=\sum_{1\le|\gamma|\le N-1} c_{\gamma}(\tau)w^{\gamma}+\sum_{|\gamma|=N}c_{\gamma}(\tau,w)w^{\gamma}\end{equation} 
with an integer $N\geq 2$,
\begin{equation*}c_{\gamma}(\tau)=(c^1_{\gamma}(\tau),c^2_{\gamma}(\tau),\dots,c^n_{\gamma}(\tau))\in\mathbb{R}^{n}
\end{equation*}
and
\begin{equation*}c_{\gamma}(\tau,w)=(c^1_{\gamma}(\tau,w),c^2_{\gamma}(\tau,w),\dots,c^n_{\gamma}(\tau,w)),
\end{equation*}
for all multi-indices $\alpha$, $\beta\in\mathbb{N}_0^{d}$ we have
\begin{equation}\label{polinomio da usare}
[\tau(w)]^{\alpha}[w-\tau(w)]^{\beta}=\sum_{|\alpha|+|\beta|\le|\delta|\le N\cdot{(|\alpha|+|\beta|)}}E_\delta(\tau,w)\prod_{i=1}^{n}w_i^{\delta_i}
\end{equation}
where each $E_\delta(\tau,w)$ is either a constant or the product of at most $|\alpha|+|\beta|$ constants $c_\gamma(\tau)$ or functions $c_\gamma(\tau,w)$.
In particular, we have $E_0(\tau,w)\equiv 1.$
\end{lemma}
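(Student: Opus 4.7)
The proof is essentially a bookkeeping multinomial expansion, organized by the total $w$-degree of each resulting monomial. The plan is to split the vector powers $[\tau(w)]^\alpha$ and $[w-\tau(w)]^\beta$ into products of scalar powers of components, substitute the polynomial form \eqref{polinomio} component-wise, and then read off the claimed bounds.

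First I would decompose
$$
[\tau(w)]^{\alpha}[w-\tau(w)]^{\beta}=\prod_{i=1}^{n}[\tau_{i}(w)]^{\alpha_{i}}\prod_{i=1}^{n}[w_{i}-\tau_{i}(w)]^{\beta_{i}},
$$
where $\tau_{i}$ denotes the $i$-th component of $\tau$. By \eqref{polinomio}, each component is a finite sum
$$
\tau_{i}(w)=\sum_{1\le|\gamma|\le N-1}c^{i}_{\gamma}(\tau)\,w^{\gamma}+\sum_{|\gamma|=N}c^{i}_{\gamma}(\tau,w)\,w^{\gamma},
$$
so every monomial appearing in $\tau_{i}(w)$ has $w$-degree between $1$ and $N$, and is weighted by exactly one coefficient $c^{i}_{\gamma}(\tau)$ or $c^{i}_{\gamma}(\tau,w)$. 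Similarly, each $w_{i}-\tau_{i}(w)$ is a sum of monomials in $w$ of degree between $1$ and $N$, with the honest $w_{i}$ appearing with the trivial coefficient $1$ and the remaining ones carrying a coefficient of $c$-type.

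Next I would expand the whole product as a multinomial over the $|\alpha|+|\beta|$ factors. Every term produced is of the form $(\text{product of at most }|\alpha|+|\beta|\text{ coefficients})\cdot\prod_{i}w_{i}^{\delta_{i}}$, because each factor contributes exactly one monomial in $w$ and at most one coefficient (its contribution being $1$ precisely in the case of the $w_{i}$-term inside $w_{i}-\tau_{i}(w)$). Since each factor contributes $w$-degree at least $1$ and at most $N$, summing over the $|\alpha|+|\beta|$ factors gives
$$
|\alpha|+|\beta|\le|\delta|\le N(|\alpha|+|\beta|),
$$
which is exactly the range in \eqref{polinomio da usare}. Grouping terms according to $\delta$ then defines $E_{\delta}(\tau,w)$ as a finite sum of such products, i.e. either a constant (when all contributing factors are trivial, which forces $\alpha=\beta=0$ and hence $E_{0}(\tau,w)\equiv 1$) or a product of at most $|\alpha|+|\beta|$ coefficients of the prescribed type.

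There is no real obstacle here; the only place where one must be slightly careful is the treatment of the $w_{i}$ term inside $w_{i}-\tau_{i}(w)$. Reading it as the monomial $w^{e_{i}}$ with trivial coefficient $1$ is what makes both the degree lower bound $|\alpha|+|\beta|\le|\delta|$ and the coefficient count (``at most $|\alpha|+|\beta|$ factors of $c$-type'') come out with the same uniform bookkeeping.
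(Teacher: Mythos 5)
Your expansion argument is correct and is essentially the argument the paper intends: the paper's entire proof is the one-line observation that the left-hand side vanishes at $w=0$ to order at least $|\alpha|+|\beta|$, which is exactly the lower bound you obtain by noting that each of the $|\alpha|+|\beta|$ factors contributes $w$-degree at least one, while the upper bound $|\delta|\le N(|\alpha|+|\beta|)$ and the coefficient count follow from the same term-by-term bookkeeping you carry out. The only cosmetic point is that after grouping by $\delta$ your $E_\delta$ is a finite sum (with signs and multinomial coefficients) of products of at most $|\alpha|+|\beta|$ coefficients rather than literally a single such product, but this matches the way the lemma is actually used in the paper, where only the resulting bound $|E_\delta(\tau,w)|\lesssim\langle w\rangle^{\mu(|\alpha|+|\beta|)}$ matters.
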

The statement follows by observing that the left hand side of \eqref{polinomio da usare} has zero at $w=0$ of order at least $|\alpha|+|\beta|.$

We will now show that any amplitude operator \eqref{EQ:quant-amp} with the amplitude in $ A^{m}_{d}$ can be written as a $\tau$-quantization of some symbol. 

\begin{theorem} 
\label{teorema da amplitude a simboli}
Let $a \in A^{m}_{d}$ and let us consider the associated operator $A_a$ defined by 
$$
A_a u(x) := \int_{\R^n} \int_{\R^n} e^{\ii (x - y) \cdot \xi} a(x, y, \xi) u(y) \,d y \, d \xi, \qquad u \in {\mathcal S}(\R^n). 
$$
Then, for any admissible function $\tau$ of order $\mu\geq 0$, there exists a symbol $\sigma$ 
such that 
$$
A_a u(x) = \int_{\R^n} \int_{\R^n} e^{\ii (x - y) \cdot \xi} \sigma(x + \tau(y - x), \xi) u(y) \, d y\, d \xi,
$$
and $\sigma$ has the following {\em weak} asymptotic expansion
\begin{multline}\label{EQ:wasexp}
\sigma(x+\tau(y-x),\xi)\sim
\\ \sum_{\alpha,\beta\geq 0}\sum_{|\alpha| + |\beta|\le |\delta |\le N(|\alpha| + |\beta|)} k_\delta(\tau,\alpha,\beta,x-y)\partial_{x}^{\alpha}\partial_{y}^{\beta}\partial_{\xi}^{\delta}a(v,v,\xi)|_{v=x+\tau(y-x)},
\end{multline} 
where we can take any $N\geq 1$, and where each of the terms 
$$k_\delta(\tau,\alpha,\beta,x-y)\partial_{x}^{\alpha}\partial_{y}^{\beta}\partial_{\xi}^{\delta}a(v,v,\xi)|_{v=x+\tau(y-x)}$$ is a symbol in $S^{m-|\delta|}_{\mu(|\alpha|+|\beta|),\tau}$. Moreover, we have $k_0(\tau,0,0,x-y)\equiv1$.

The weak asymptotic expansion, in \eqref{EQ:wasexp} and in the sequel, will mean that it is an asymptotic expansion in $\xi$ in the classical sense locally in space variables, and globally, for any $M\in\mathbb N_0$, we have
\begin{multline}\label{EQ:aexpm}
\sigma(x+\tau(y-x),\xi)-\sum_{|\alpha|+|\beta|\leq M}\sum_{|\alpha| + |\beta|\le |\delta |\le N(|\alpha| + |\beta|)} k_\delta(\tau,\alpha,\beta,x-y) \times \\
\times \partial_{x}^{\alpha}\partial_{y}^{\beta}\partial_{\xi}^{\delta}a(v,v,\xi)|_{v=x+\tau(y-x)}\in A^{m-M}_{(\mu+d)M}.
\end{multline} 
In such cases we will also say that $\sigma$ is an asymptotic sum with terms in $S^{m-(|\alpha|+|\beta|)}_{\mu(|\alpha|+|\beta|),\tau}$ and remainder in $A^{m-(|\alpha|+|\beta|)}_{(\mu+d)(|\alpha|+|\beta|)}.$

By Lemma \ref{EQUIVALENZA} each of the symbols $k_\delta(\tau,\alpha,\beta,x-y)\partial_{x}^{\alpha}\partial_{y}^{\beta}\partial_{\xi}^{\delta}a(v,v,\xi)|_{v=x+\tau(y-x)}\in S^{m-|\delta|}_{\mu(|\alpha|+|\beta|),\tau}$ gives rise to a pseudo-differential operator with an amplitude weakly belonging to $A^{m-|\delta|}_0$.
\end{theorem}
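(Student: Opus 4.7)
The natural strategy is a joint Taylor expansion of $a(x,y,\xi)$ in the $(x,y)$ variables around the ``diagonal'' point $(v,v)$ with $v:=x+\tau(y-x)$, followed by integration by parts in $\xi$. With this choice one has $x-v=-\tau(y-x)$ and $y-v=(y-x)-\tau(y-x)$, so Taylor's formula in $2n$ variables gives, for any $M\in\mathbb{N}_0$,
\begin{equation*}
a(x,y,\xi)=\sum_{|\alpha|+|\beta|<M}\frac{(-\tau(y-x))^{\alpha}\,((y-x)-\tau(y-x))^{\beta}}{\alpha!\,\beta!}\,\partial_{x}^{\alpha}\partial_{y}^{\beta}a(v,v,\xi)+R_M,
\end{equation*}
where $R_M$ is the usual integral remainder involving derivatives of $a$ of order $M$ evaluated on the segment joining $(v,v)$ to $(x,y)$.

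For each finite-order term I would then apply Lemma \ref{PRODPOLINOMI} with $w=y-x$ to expand $[-\tau(w)]^{\alpha}[w-\tau(w)]^{\beta}$ as a sum $\sum_{|\alpha|+|\beta|\le|\delta|\le N(|\alpha|+|\beta|)} E_{\delta}(\tau,w)\,w^{\delta}$, inherit the factor $w^{\delta}$ out of the amplitude, and integrate by parts in $\xi$ using $w^{\delta}e^{\ii(x-y)\cdot\xi}=\ii^{|\delta|}\partial_{\xi}^{\delta}e^{\ii(x-y)\cdot\xi}$. Since $E_{\delta}$ is independent of $\xi$, the derivatives $\partial_{\xi}^{\delta}$ land on $\partial_{x}^{\alpha}\partial_{y}^{\beta}a(v,v,\xi)$ alone, and setting
\begin{equation*}
k_\delta(\tau,\alpha,\beta,x-y):=\frac{(-1)^{|\alpha|}\,\ii^{|\delta|}}{\alpha!\,\beta!}\,E_\delta(\tau,y-x)
\end{equation*}
yields exactly the terms appearing in \eqref{EQ:wasexp}. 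The base case $\alpha=\beta=\delta=0$ leaves $a(v,v,\xi)|_{v=x+\tau(y-x)}$ untouched, giving $k_0(\tau,0,0,x-y)\equiv 1$.

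Membership of each term in $S^{m-|\delta|}_{\mu(|\alpha|+|\beta|),\tau}$ is then a direct calculation: when one differentiates the composition $\partial_{x}^{\alpha}\partial_{y}^{\beta}\partial_{\xi}^{\delta}a(v,v,\xi)|_{v=x+\tau(y-x)}$ in $x$, $y$, or $\xi$, the chain rule produces higher derivatives of $a$ bounded by $\langle\xi\rangle^{m-|\delta|-(\text{extra }\xi\text{-orders})}$, together with derivatives of $\tau$ which by \eqref{EQ:admiss-cond} are $O(\langle y-x\rangle^{\mu})$; the prefactor $k_\delta$, a product of at most $|\alpha|+|\beta|$ coefficients from \eqref{polinomio}, contributes $\langle y-x\rangle^{\mu(|\alpha|+|\beta|)}$. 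These match exactly the estimate in \eqref{EQ:s-class}.

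The main obstacle is the global remainder estimate \eqref{EQ:aexpm}. For this I would run the same Lemma \ref{PRODPOLINOMI}/integration-by-parts procedure on $R_M$, transferring $M$ factors of $(y-x)$ onto $\partial_{\xi}^{M}$ acting on derivatives of $a$ at intermediate points on the Taylor segment. The $A^{m}_{d}$-bound converts these $\xi$-derivatives into decay $\langle\xi\rangle^{-M}$, while the $(x,y)$-derivatives (the $M$ coming from Taylor's formula together with any extra taken to verify the class) contribute $\langle x-y\rangle^{dM}$ from the amplitude and $\langle x-y\rangle^{\mu M}$ from the combined $\tau$- and $E_{\delta}$-factors, giving the claimed $\langle x-y\rangle^{(\mu+d)M}$ growth. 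The delicate point is that the amplitude in $R_M$ is evaluated at a point on the segment from $(v,v)$ to $(x,y)$ rather than at $(v,v)$ itself, so one must check that the $A^{m}_{d}$-estimates are stable under this shift; this uses only $|\tau(y-x)|\lesssim\langle y-x\rangle^{\mu+1}$, which follows from $\tau(0)=0$ and \eqref{EQ:admiss-cond}.
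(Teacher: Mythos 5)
Your proposal follows essentially the same route as the paper's proof: Taylor expansion of $a$ in $(x,y)$ around $(v,v)$ with $v=x+\tau(y-x)$, Lemma \ref{PRODPOLINOMI} to rewrite $[-\tau(w)]^{\alpha}[w-\tau(w)]^{\beta}$ as $\sum E_\delta(\tau,w)w^{\delta}$, integration by parts in $\xi$ to produce the $\partial_\xi^{\delta}$-terms and the coefficients $k_\delta$, and the same admissibility/Leibniz estimates for both the terms and the integral remainder. The only cosmetic difference is your framing of the remainder as a ``stability under shift'' issue: since the $A^m_d$ bounds are global, one only needs that the difference of the first two arguments at the point on the Taylor segment equals $t(x-y)$, which is exactly how the paper obtains the $\langle x-y\rangle^{d(|\alpha|+|\beta|)}$ factor uniformly in $t\in[0,1]$.
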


\begin{remark}\label{REM:NN}
We note that the number $N$ in \eqref{EQ:wasexp} can be any integer number. It is related to how many terms in the Taylor expansion of $\tau$ we will take in the proof in formula \eqref{EQ:tauexp}. The simplest formulae are obtained by taking $N=1$, in which case all the asymptotic expansions in this paper are simplified since then the sum $\sum_{|\alpha| + |\beta|\le |\delta |\le N(|\alpha| + |\beta|)}$ is just a simple sum $\sum_{|\delta |=|\alpha| + |\beta|}$. However, 
for more flexibility we allow for any $N$ in the expressions allowing for a slightly more general form of the expansions. These are useful in the case when $\tau$ is polynomial, as e.g. in \eqref{EQ:tauheisex}.
\end{remark}

\begin{proof}[Proof of Theorem \ref{teorema da amplitude a simboli}]
Let us consider the changes of variables 
$$
\begin{cases}
v:=x + \tau(y - x), \\
w:=x - y,
\end{cases}
$$
so that we have
$$
\begin{cases}
x = v - \tau(- w), \\
y = v - w - \tau(- w). 
\end{cases}
$$
Then in these new variables we can write 
$$
a (x, y, \xi) = a (v - \tau(- w), v - w - \tau(- w), \xi). 
$$
We now expand $a(x,y,\xi)$ in the Taylor series in the first two variables around the point $(v, v, \xi)$ obtaining, for any $M \in \N$, that 
\begin{align}
a (x, y, \xi) & = \sum_{|\alpha| + |\beta| < M} \frac{\partial_x^{\alpha} \partial_y^\beta a(v, v, \xi) [-\tau(- w)]^\alpha [ - w - \tau(- w)]^\beta }{\alpha ! \beta !}+ r_M(x, y, \xi)\,, \label{taylor simbolo a con tau alpha beta}
\end{align} 
with the remainder given by 
\begin{multline}\label{definizione resto di Taylor teo Schubin}
r_M(x, y, \xi) = \frac{1}{(M - 1)!}\sum_{|\alpha| + |\beta| = M} [-\tau(- w)]^\alpha [ - w - \tau(- w)]^\beta \times \\ \times \int_0^1 (1 - t)^{M - 1} \partial_x^{\alpha} \partial_y^\beta a\Big(v - t \tau(- w) , v + t( - w - \tau(- w)), \xi \Big)\, d t. 
\end{multline}
To analyse the terms $[\tau(- w)]^\alpha [ - w - \tau(- w)]^\beta$,
we do another Taylor expansion of $\tau$ around 0  writing it in the form \eqref{polinomio}, that is, for any $N\geq 1$, 
\begin{equation}\label{EQ:tauexp}
\tau(w)=\sum_{1\le|\alpha_0|\le N-1}c_{\alpha_0}(\tau)w^{\alpha_0}+\sum_{|\beta_0|=N}c_{\beta_0}(\tau,w)w^{\beta_0}.
\end{equation} 
By Lemma \ref{PRODPOLINOMI} we can write is then in the form
 \begin{equation}\label{polinomi alpha beta tau primo teorema}
[\tau(- w)]^\alpha [ - w - \tau(- w)]^\beta = \sum_{|\alpha|+|\beta|\le|\delta | \le N\cdot(|\alpha| + |\beta|)} E_\delta(\tau,w) w^\delta\,,
\end{equation}
for some $E_\delta$ up to the signs coinciding with that in \eqref{polinomio da usare}.
Now recalling that $w = x - y$,  for any $|\alpha| + |\beta| < M$ we have
\begin{multline*}
\partial_x^{\alpha} \partial_y^\beta a(v, v, \xi) [\tau(- w)]^\alpha [ - w - \tau(- w)]^\beta  \\
= \sum_{|\alpha|+|\beta|\le|\delta | \le N\cdot(|\alpha| + |\beta|)} E_\delta(\tau,x-y) \partial_x^{\alpha} \partial_y^\beta a(v, v, \xi) (x - y)^\delta.
\end{multline*} 
We note that integrating by parts in $\xi$ under the quantization integral, the operator associated to the amplitude $E_{\delta}(\tau,x-y)\partial_x^{\alpha} \partial_y^\beta a(v, v, \xi)(x-y)^{\delta}$ is the same as the one associated to the amplitude $E_{\delta}(\tau,x-y)(\frac{-1}{\ii})^{|\delta|} \partial_x^{\alpha} \partial_y^\beta \partial_\xi^\delta a(v, v, \xi) $. Consequently, we obtain the equality 
\begin{multline*}
{\rm Op}\Big( \partial_x^{\alpha} \partial_y^\beta a(v, v, \xi) [\tau(- w)]^\alpha [ - w - \tau(- w)]^\beta \Big) \\
= \sum_{|\alpha|+|\beta|\le|\delta | \le N\cdot(|\alpha| + |\beta|)}  {\rm Op}\Big(  \ii^{|\delta|} E_\delta(\tau,x-y)\partial_x^{\alpha} \partial_y^\beta \partial_\xi^\delta a(v, v, \xi) \Big).
\end{multline*} 
According to \eqref{taylor simbolo a con tau alpha beta}, defining
\begin{equation}\label{costante finale prova primo teorema}
k_\delta(\tau, \alpha, \beta,x-y) :=  \frac{\ii^{|\delta|} E_\delta(\tau,x-y) }{\alpha! \beta !}
\end{equation} we can set
\begin{equation}\label{definizione simbolo bM}
{\sigma_{\tau}}_M(v, \xi) := \sum_{|\alpha| + |\beta| < M} \sum_{|\alpha|+|\beta|\le|\delta | \le N\cdot(|\alpha| + |\beta|)}k_\delta(\tau, \alpha, \beta,x-y) \partial_x^\alpha \partial_y^\beta \partial_\xi^\delta a(v, v, \xi). 
\end{equation}
Note that for any $|\alpha|+|\beta|\le|\delta | \le N\cdot(|\alpha| + |\beta|)$, we have 
$$k_\delta(\tau, \alpha, \beta,x-y) \partial_x^\alpha \partial_y^\beta \partial_\xi^\delta a(v, v, \xi)  \in S^{m - |\delta|}_{\mu(|\alpha|+|\beta|)},$$
where $\mu$ is the order of $\tau$.

Indeed the element $k_{\delta}(\tau,\alpha,\beta,x-y)$ is composed, as shown in Lemma \ref{PRODPOLINOMI}, by at most $|\alpha|+|\beta|$ functions of $x-y$, each of which comes in our case  from the remainder of the Taylor expansion of $\tau_i$, $i^{th}$ component of $\tau$, which  has the form
\begin{equation}\label{REMINDERTAO}\frac{|\beta|}{\beta!}\int_{0}^{1}(1-t)^{|\beta|-1}\partial^{\beta}\tau_i(t(x-y))dt,\end{equation}
where $\beta$ is a multi-index of length $N.$  In view of Definition \ref{TAO AMMISSIBILE} this product can be majorised by $\langle x-y\rangle^{\mu(|\alpha|+|\beta|)}.$ Also when we differentiate $k_{\delta}(\tau,\alpha,\beta,x-y)$ with respect to some components of $x$ or $y$, let's say $y_j$,  we will get, due to the Leibniz rule, a sum of products of at most $|\alpha|+|\beta|$ functions of $x-y$. For example, the function corresponding to $\partial_{y_j}$ would have the form
$$\partial_{y_j}\frac{|\beta|}{\beta!}\int_{0}^{1}(1-t)^{|\beta|-1}\partial^{\beta}\tau_i(t(x-y))dt=\frac{|\beta|}{\beta!}\int_{0}^{1}(1-t)^{|\beta|-1}(-t)\partial^{\beta+\delta_j}\tau_i(t(x-y))dt,$$
that can be again dominated, when one passes to the modulus, by a constant multiple of $\langle x-y\rangle^{\mu}$. All the other functions will be identical to \eqref{REMINDERTAO}, therefore, the whole product will be again majorised by $\langle x-y\rangle^{\mu(|\alpha|+|\beta|)}$. Iterating this argument it follows that $k_{\delta}(\tau,\alpha,\beta,x-y)$ is always dominated by $\langle x-y\rangle^{\mu(|\alpha|+|\beta|)}$.

When we differentiate with respect to some $y_j$ the term $\partial_{x}^{\alpha}\partial_{y}^{\beta}\partial_{\xi}^{\delta}a(v,v,\xi)$ we get instead
\begin{align*}&\partial_{y_{j}}\partial_{x}^{\alpha}\partial_{y}^{\beta}\partial_{\xi}^{\delta}a(v,v,\xi)\\
&=\nabla(\partial_{x}^{\alpha}\partial_{y}^{\beta}\partial_{\xi}^{\delta}a(x,y,\xi))|_{x=y=v}&\cdot(\partial_{y_j}\tau_1(x-y),\dots,\partial_{y_j}\tau_i(x-y),\dots,\partial_{y_j}\tau_n(x-y),\\
&&,\partial_{y_j}\tau_1(x-y),\dots,\partial_{y_j}\tau_i(x-y),\dots,\partial_{y_j}\tau_n(x-y)),
\end{align*}
which is a sum of products of a derivative with respect to a spacial variable of the amplitude $\partial_x^{\alpha}\partial_{y}^{\beta}\partial_{\xi}^{\delta}a(x,y,\xi)$ evaluated in $x=y=v$ multiplied by some $\partial_{y_j}\tau_i(x-y)$. All of this can be dominated when one passes to the modulus by $\langle\xi\rangle^{m-|\delta|}\langle x-y\rangle^{\mu}$. A similar argument runs for $x_j$. Finally if one iterates this, it follows that 
$$|\partial_{\xi}^{\delta'}\partial_{x}^{\alpha'}\partial_{y}^{\beta'}(\partial_x^{\alpha}\partial_{y}^{\beta}\partial_{\xi}^{\delta}a(x,y,\xi))|\lesssim_{\alpha',\beta',\delta'}\langle\xi\rangle^{m-|\delta|-|\delta'|}\langle x-y\rangle^{\mu(|\alpha'|+|\beta'|)}.$$
\\
Since the derivative with respect to $\partial_{\xi}^{\delta'}\partial_{x}^{\alpha'}\partial_{y}^{\beta'}$ of $k_\delta(\tau, \alpha, \beta,x-y) \partial_x^\alpha \partial_y^\beta \partial_\xi^\delta a(v, v, \xi)$ will be a combination of the derivative of $k_\delta(\tau, \alpha, \beta,x-y)$ and those of $\partial_x^\alpha \partial_y^\beta \partial_\xi^\delta a(v, v, \xi)$, due to the Leibniz rule, 
we have that
\begin{align*}|\partial_{\xi}^{\delta'}\partial_{x}^{\alpha'}\partial_{y}^{\beta'}(k_\delta(\tau, \alpha, \beta,x-y)\partial_x^{\alpha}\partial_{y}^{\beta}\partial_{\xi}^{\delta}a(x,y,\xi))|\\
\lesssim_{\alpha',\beta',\delta'}\langle\xi\rangle^{m-|\delta|-|\delta'|}\langle x-y\rangle^{\mu(|\alpha'|+|\beta'|)}\langle x-y\rangle^{\mu(|\alpha|+|\beta|)}.\\ \lesssim_{\alpha',\beta',\delta'}\langle\xi\rangle^{m-|\delta|-|\delta'|}\langle x-y\rangle^{\mu(|\alpha|+|\beta|)(|\alpha'|+|\beta'|)}.\end{align*}
The last inequality is justified by the fact $|\alpha|+|\beta|+|\alpha'|+|\beta'|\le(|\alpha|+|\beta|)\cdot(|\alpha'|+|\beta'|)$.
\\
All of this together gives us that $k_\delta(\tau, \alpha, \beta,x-y) \partial_x^\alpha \partial_y^\beta \partial_\xi^\delta a(v, v, \xi)$ in $S^{m-|\delta|}_{\mu(|\alpha|+|\beta|)}$. Applying Lemma \ref{EQUIVALENZA} we have $k_\delta(\tau, \alpha, \beta,x-y) \partial_x^\alpha \partial_y^\beta \partial_\xi^\delta a(v, v, \xi)$, and thus $\sigma_{\tau_M}$, produce quantizations equivalent to that of symbols in $S^{m-|\delta|}_{0}$, up to any finite number of derivatives.

It remains to estimate the amplitude $r_M(x, y, \xi)$ defined in \eqref{definizione resto di Taylor teo Schubin}. 
We have, substituting again \eqref{polinomi alpha beta tau primo teorema} in \eqref{definizione resto di Taylor teo Schubin}, that
\begin{multline*}
r_{M}(x,y,\xi)=\frac{1}{(M-1)!}\sum_{|\alpha|+|\beta|=M}\sum_{|\alpha|+|\beta|\le|\delta | \le N\cdot(|\alpha| + |\beta|)} E_\delta(\tau,x-y) (x-y)^{\delta} \times \\
\times \int_0^1 (1 - t)^{M - 1} \partial_x^{\alpha} \partial_y^\beta a\Big(v - t \tau(- w) , v + t( - w - \tau(- w)), \xi \Big)\, d t.
\end{multline*}
Now the operator coming from  the term
$$E_{\delta}(\tau,x-y)(x-y)^{\delta}\int_0^1 (1 - t)^{M - 1} \partial_x^{\alpha} \partial_y^\beta a\Big(v - t \tau(- w) , v + t( - w - \tau(- w)), \xi \Big)\, d t\,$$
is equivalent to the operator coming from the term
\begin{multline*}
r_{M,\delta}(x,y,\xi) \\
= (-1)^{|\delta|}E_{\delta}(\tau,x-y)\int_0^1 (1 - t)^{M - 1} \partial_x^{\alpha} \partial_y^\beta \partial_{\xi}^{\delta}a\Big(v - t \tau(- w) , v + t( - w - \tau(- w)), \xi \Big)\, d t.
\end{multline*} 
Thus in order to study $r_{M}(x,y,\xi)$ it is enough to study each of $r_{M,\delta}(x,y,\xi)$.
For the latter, due to the fact we are using an amplitude from $A^m_d$, we have the following estimate  for the integrand
\begin{eqnarray*}
|\partial_{x}^{\alpha}\partial_{y}^{\beta}\partial_{\xi}^{\delta}a\Big(v - t \tau(- w) , v + t( - w - \tau(- w)), \xi \Big)|&\lesssim& \langle \xi \rangle^{m - |\delta|} \langle tw \rangle^{d\cdot( |\alpha| + |\beta| )}\\
&\lesssim&\langle \xi \rangle^{m - |\delta|} \langle w \rangle^{d\cdot( |\alpha| + |\beta| )}\\
&=&\langle \xi \rangle^{m - |\delta|} \langle x-y \rangle^{d\cdot( |\alpha| + |\beta| )},\end{eqnarray*}
which is thus uniform in $t\in[0,1]$.

In order to have a complete understanding of $r_{M,\delta}(x,y,\xi)$ we are left to analysing $E_{\delta}(\tau,x-y)$. Again in view of Lemma \ref{PRODPOLINOMI} we know that this is the product of at most $|\alpha|+|\beta|$ functions of $w$. These functions, as we have already observed, have the form
\begin{eqnarray*}
\int_{0}^{1}(1-t)^{|\beta|-1}\partial^{\beta}\tau_i(t(x-y)) dt,\qquad\mbox{ with }\beta \mbox{ a multindex of order }N.
\end{eqnarray*}
Since $\tau$ is admissible of order $\mu\geq 0$,
each of these integrals can be dominated in the following way
\begin{eqnarray*}
\int_{0}^{1}(1-t)^{|\beta|-1}\partial^{\beta}\tau_i(t(x-y)) dt\lesssim \langle t(x-y)\rangle^{\mu}\le \langle x-y\rangle^{\mu}.
\end{eqnarray*}

Therefore, we have  $|E_{\delta}(\tau,x-y)|\lesssim \langle x-y\rangle^{\mu(|\alpha|+|\beta|)}$ and, subsequently, also  $r_{M,\delta}(x,y,\xi)$ is in $A^{m-|\delta|}_{(\mu+d)(|\alpha|+|\beta|)}$, which gives that $r_{M}(x,y,\xi)$ is in 
$A^{m-|\delta|}_{(\mu+d)(|\alpha|+|\beta|)}\subset A^{m-(|\alpha|+|\beta|)}_{(\mu+d)(|\alpha|+|\beta|)}$.

Again using Lemma \ref{EQUIVALENZA} we have that the operator with the amplitude $r_{M}(x,y,\xi)$ is weakly equivalent to an operator with an amplitude in $A^{m-|\delta|}_{0}$, and in view of the $|\delta|$ going to infinity, to a smoothing one.
\end{proof}

As a corollary of Theorem \ref{teorema da amplitude a simboli}  and in view of the properties of pseudo-differential operators in H\"ormander's classes, we get the following corollary:

\begin{corollary}\label{PASSAGGIOAKNAKNW}
The operators with amplitudes in the class $A^m_d$ can be represented as the Weyl, Kohn-Nirenberg, or anti-Kohn-Nirenberg operators with symbols in $S^m$ modulo a smoothing pseudo-differential operator, with remainders with symbols in $A^{m-M}_{dM}$, for any $M\geq 1$.
\end{corollary}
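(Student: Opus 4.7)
The approach is simply to specialize Theorem \ref{teorema da amplitude a simboli} to the three linear choices of quantizing function: $\tau(x)=0$ (Kohn-Nirenberg), $\tau(x)=x$ (anti-Kohn-Nirenberg), and $\tau(x)=\tfrac{x}{2}$ (Weyl). Each of these is smooth with $\tau(0)=0$ and has all derivatives of order $\geq 2$ equal to zero, while the first derivatives are uniformly bounded, so the admissibility condition \eqref{EQ:admiss-cond} is satisfied with $\mu=0$. This is the key observation that will make everything collapse into the standard H\"ormander class $S^m$.

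First I would apply Theorem \ref{teorema da amplitude a simboli} with each of the three linear $\tau$'s in turn: given $a\in A^m_d$ we obtain a symbol $\sigma$ with $A_a = \Opt(\sigma)$ whose weak asymptotic expansion \eqref{EQ:wasexp} has each term in $S^{m-|\delta|}_{\mu(|\alpha|+|\beta|),\tau}=S^{m-|\delta|}_{0,\tau}$, since $\mu=0$. Since $\tau$ is linear, the change of variables $v=x+\tau(y-x)$ is an affine bijection of $\R^n\times\R^n$, and condition \eqref{EQ:s-class} in the class $S^{m-|\delta|}_{0,\tau}$ reduces (with $d=0$) to the standard estimate $|\partial_v^\alpha\partial_\xi^\gamma \sigma(v,\xi)|\lesssim \langle\xi\rangle^{m-|\delta|-|\gamma|}$. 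Hence each term of the expansion lies in $S^{m-|\delta|}$, and in particular the leading term (coming from $|\alpha|=|\beta|=0$, $|\delta|=0$, where $k_0(\tau,0,0,x-y)\equiv 1$) is a symbol in $S^m$.

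Next I would handle the remainder. Truncating the expansion \eqref{EQ:aexpm} at $|\alpha|+|\beta|\leq M$ gives a remainder amplitude in $A^{m-M}_{(\mu+d)M}=A^{m-M}_{dM}$, which is exactly the bound asserted in the corollary. Letting $M\to\infty$ and invoking Lemma \ref{EQUIVALENZA} together with the weak asymptotic nature of \eqref{EQ:wasexp}, the tail of the sum, after any finite number of derivatives, becomes an amplitude associated to a smoothing operator, since the $\xi$-order tends to $-\infty$ while the $\langle x-y\rangle$-growth is compensated by extra integrations by parts via the operator $\mathcal{L}$ of Lemma \ref{EQUIVALENZA}.

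There is essentially no obstacle here beyond checking that the three special choices fall under the hypotheses of Theorem \ref{teorema da amplitude a simboli} with $\mu=0$ and that the symbol class $S^{m}_{0,\tau}$ coincides with the classical H\"ormander class $S^m$ when $\tau$ is linear; the only minor point worth making explicit is that the constants $k_\delta(\tau,\alpha,\beta,x-y)$ in \eqref{costante finale prova primo teorema} become genuine constants (independent of $x-y$) in these three cases, because the remainder terms \eqref{REMINDERTAO} vanish identically for linear $\tau$ once one chooses $N$ sufficiently large (e.g.\ $N=2$). Consequently the asymptotic expansion specializes to the classical one, and the statement follows directly.
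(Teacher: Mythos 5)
Your argument is exactly the paper's: the corollary is obtained by applying Theorem \ref{teorema da amplitude a simboli} to the three linear quantizing functions $\tau(x)=\tfrac{x}{2}$, $\tau(x)=0$, $\tau(x)=x$, which are admissible of order $\mu=0$, so the remainder class $A^{m-M}_{(\mu+d)M}$ collapses to $A^{m-M}_{dM}$ and the terms lie in the classical class $S^{m}$. Your additional observations (the $k_\delta$ becoming genuine constants and $S^m_{0,\tau}=S^m$ for linear $\tau$) are correct elaborations of the same route, so nothing further is needed.
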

We note that since the above quantizing functions are linear we use Theorem \ref{teorema da amplitude a simboli} with $\mu=0$.

As a further consequence we have a formula to pass from a $\tau_1$-representation of an operator $A$ to a $\tau_2$-representation of the same operator, for different admissible functions $\tau_1$ and $\tau_2$.

\begin{corollary}\label{COROLLARIO PASSAGGIO}
Let $\tau_1$ and $\tau_2$ be admissible quantizing functions of respective orders $\mu_1,\mu_2\geq 0$.
Given an operator with a $\tau_1$-symbol,
$$A_{\sigma_{\tau_1}}u(x)=\int_{\mathbb{R}^{n}}\int_{\mathbb{R}^{n}}
e^{\ii(x-y)\cdot\xi}\sigma_{\tau_1}(x+\tau_1(y-x),\xi)u(y)dyd\xi,$$
it is possible to represent it also as an operator with a $\tau_2$-symbol,  for any $N\in\N$, with its asymptotic expansion given by
\begin{multline}\label{EQ:taus}
\sigma_{\tau_2}(x+\tau_2(y-x),\xi) \\
\sim\sum_{\alpha,\beta\geq 0}\sum_{|\alpha| + |\beta|\le |\delta | \le N\cdot(|\alpha| + |\beta|)} k_\delta(\tau_2,\alpha,\beta,x-y)
\partial_{x}^{\alpha}\partial_{y}^{\beta}\partial_{\xi}^{\delta}
\left[\sigma_{\tau_1}(x+\tau_1(y-x),\xi)\right]|_{x+\tau_2(y-x)}.
\end{multline} 
Moreover, we have $k_0(\tau_2,0,0,x-y)=1$ (for $\alpha=\beta=\delta=0$).

In the above, if $\sigma_{\tau_1}\in S^m$, then also $\sigma_{\tau_1}\in S^m_{\mu_1,\tau_1}$, and \eqref{EQ:taus} is a weak asymptotic expansion with terms in in $S^{m-(|\alpha|+|\beta|)}_{\mu(|\alpha|+|\beta|)}$ and remainder in $A^{m-M}_{(\mu_1+\mu_2)M}$, for any $M\geq 1$.

\end{corollary}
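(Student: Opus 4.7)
The plan is to reduce the corollary to a direct application of Theorem \ref{teorema da amplitude a simboli}. First I would view the $\tau_1$-quantization $A_{\sigma_{\tau_1}}$ as an amplitude operator $\Op(a)$ with amplitude
\[ a(x,y,\xi) := \sigma_{\tau_1}(x+\tau_1(y-x),\xi). \]
The preliminary step is to verify that $a \in A^m_{\mu_1}$: this is automatic since the inequality defining $\sigma_{\tau_1}\in S^m_{\mu_1,\tau_1}$ in \eqref{EQ:s-class} is literally the inequality defining $A^m_{\mu_1}$ in \eqref{EQ:s-amp}, with $d=\mu_1$.

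Next I would invoke Theorem \ref{teorema da amplitude a simboli} with this amplitude $a$ and the quantizing function $\tau_2$ of order $\mu_2$. This produces a $\tau_2$-symbol $\sigma_{\tau_2}$ such that $A_{\sigma_{\tau_1}} = \Op_{\tau_2}(\sigma_{\tau_2})$, together with the weak asymptotic expansion \eqref{EQ:wasexp}. Substituting the specific $a$ above and observing that $\partial_x^\alpha\partial_y^\beta\partial_\xi^\delta a(v,v,\xi)|_{v=x+\tau_2(y-x)}$ is exactly the expression $\partial_x^\alpha\partial_y^\beta\partial_\xi^\delta[\sigma_{\tau_1}(x+\tau_1(y-x),\xi)]|_{x+\tau_2(y-x)}$ appearing in \eqref{EQ:taus}, one recovers the stated formula. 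The identity $k_0(\tau_2,0,0,x-y)\equiv 1$ is inherited directly from the corresponding identity in Theorem \ref{teorema da amplitude a simboli}.

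The symbol class assertions are then read off from the classification in Theorem \ref{teorema da amplitude a simboli}: each term lies in $S^{m-|\delta|}_{\mu_2(|\alpha|+|\beta|),\tau_2}$, with the $\mu$ of that theorem now being $\mu_2$, the order of the new quantizing function, while the remainder for the truncation at order $M$ sits in $A^{m-M}_{(\mu_2+d)M}$ with $d=\mu_1$, i.e.\ in $A^{m-M}_{(\mu_1+\mu_2)M}$, as claimed.

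No substantive obstacle arises: the corollary is essentially a specialisation of Theorem \ref{teorema da amplitude a simboli}. The only point that requires a careful eye is the bookkeeping of orders, namely recognising that the amplitude $a$ carries order $d=\mu_1$ inherited from the $\tau_1$-admissibility, and that the parameter $\mu$ in Theorem \ref{teorema da amplitude a simboli} must be instantiated with $\mu_2$ once $\tau_2$ is taken as the quantizing function, so that the two orders add up in the final remainder class $A^{m-M}_{(\mu_1+\mu_2)M}$.
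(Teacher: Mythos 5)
Your proposal is correct and coincides with the paper's own (one-line) proof: the paper also takes $a(x,y,\xi)=\sigma_{\tau_1}(x+\tau_1(y-x),\xi)$ as an amplitude and applies Theorem \ref{teorema da amplitude a simboli} with quantizing function $\tau_2$. Your additional bookkeeping, namely that $\sigma_{\tau_1}\in S^m_{\mu_1,\tau_1}$ gives $a\in A^m_{\mu_1}$ so that the theorem with $d=\mu_1$ and $\mu=\mu_2$ yields the remainder class $A^{m-M}_{(\mu_1+\mu_2)M}$, is exactly the verification the paper leaves implicit.
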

The notation for the derivatives in \eqref{EQ:taus} means that we first differentiate the function $\sigma_{\tau_1}(x+\tau_1(y-x),\xi)$ by applying the operator $\partial_{x}^{\alpha}\partial_{y}^{\beta}\partial_{\xi}^{\delta}$ to it, and then we plug in $x+\tau_2(y-x)$ in the place of the first variable of $\sigma_{\tau_1}(\cdot,\xi).$
\proof
We can set as an amplitude $a(x,y,\xi)$ the symbol $\sigma_{\tau_1}(x+\tau_1(y-x),\xi)$, and then apply Theorem \ref{teorema da amplitude a simboli}.
\endproof

We can also compute the adjoint and the transpose of an operator $A_{\sigma,{\tau}}$ and give an asymptotic expansion of these two new operators in terms of $\sigma$ and $\tau$. We will adopt the notation of functions $k_\delta(\tau,\alpha,\beta,x-y)$ arising from Theorem \ref{teorema da amplitude a simboli}.

\begin{proposition}\label{ADJOINT}
Let $\tau$ be an admissible quantizing function of order $\mu\geq 0$. Let $\sigma\in S^m$ and let $A_{\sigma,{\tau}}$ be the corresponding $\tau$-quantized operator as in \eqref{EQ:quant}. Let us denote
\begin{equation}\label{EQ:newtau}
 \tau^{*}(z):=z+\tau(-z),\quad z\in\Rn,
\end{equation}
which is also an admissible quantizing function of the same order $\mu$.
Then the transpose operator $A^{t}_{\sigma,{\tau}}$ is the operator with the amplitude given by $\sigma(x+\tau^{*}(y-x)),-\xi)$ with the quantizing function $\tau^*$, i.e. 
$$
A_{\sigma,{\tau}}^{t}v(x)=\int_{\mathbb{R}^{n}}\int_{\mathbb{R}^{n}}e^{\ii(x-y)\cdot\xi}\sigma(x+\tau^{*}(y-x),-\xi) v (y) dyd\xi.
$$
We can view it as a $\tau^*$-quantized operator with the symbol $\sigma_t(x,\xi)=\sigma(x,-\xi)$, i.e.
$$
A_{\sigma,{\tau}}^{t}=A_{\sigma_t,\tau^*}.
$$
Furthermore, $A_{\sigma,{\tau}}^{t}$ can be also seen as a $\tau$-quantization with a symbol $\sigma'$: denoting $\sigma_{\tau^t}(x,y,\xi):=\sigma(x+\tau^{*}(y-x),-\xi)$, for any $N\in\N$ it has the weak asymptotic expansion
\begin{multline}\label{EQ:tauadj}
\sigma'(x+\tau(y-x),\xi) \\
\sim\sum_{\alpha,\beta\geq 0}\sum_{|\alpha| + |\beta|\le |\delta |\le N\cdot(|\alpha| + |\beta|)} k_\delta(\tau,\alpha,\beta,x-y)\partial_{x}^{\alpha}\partial_{y}^{\beta}\partial_{\xi}^{\delta}\sigma_{\tau^{t}}(v,v,-\xi)|_{x+\tau(y-x)},
\end{multline} 
with terms in $S^{m-(|\alpha|+|\beta|)}_{\mu(|\alpha|+|\beta|)}$ and remainder in $A^{m-M}_{2\mu M}$, for any $M\geq 1$.

The adjoint $A_{\sigma,{\tau}}^{*}$ of $A_{\sigma,{\tau}}$ is the operator with the symbol $\overline{\sigma}$ but with the quantization function $\tau^*$, i.e.
$$
A_{\sigma,{\tau}}^{*}=A_{\overline{\sigma},\tau^*}.
$$
Furthermore, $A_{\sigma,{\tau}}^{*}$ can be also seen as a $\tau$-quantization with the symbol $\sigma''$: denoting $\sigma_{\tau^*}(x,y,\xi):=\overline{\sigma(x+\tau^{*}(y-x),\xi)}$, for any $N\in\N$ it has the asymptotic expansion
\begin{multline}\label{EQ:tauadj2}
\sigma''(x+\tau(y-x))\\
\sim\sum_{\alpha,\beta\geq 0}\sum_{|\alpha|+|\beta|\le |\delta |\le N\cdot(|\alpha| + |\beta|)}k_\delta(\tau,\alpha,\beta,x-y)\partial_{x}^{\alpha}\partial_{y}^{\beta}\partial_{\xi}^{\delta}{\sigma_{\tau^{*}}(v,v,\xi)}|_{x+\tau(y-x)},
\end{multline} 
with terms in $S^{m-|\alpha|-|\beta|}_{\mu(|\alpha|+|\beta|)}$ and remainder in $A^{m-M}_{2\mu M}$, for any $M\geq 1$.
\end{proposition}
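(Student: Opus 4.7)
The plan is to compute the transpose and the adjoint directly from the duality pairings, recognize them as $\tau^*$-quantizations, and then convert to $\tau$-quantizations by invoking Theorem~\ref{teorema da amplitude a simboli}. The one algebraic identity that drives everything is
\[
y+\tau(x-y)=x+\tau^*(y-x),
\]
which is a direct consequence of the definition $\tau^*(z):=z+\tau(-z)$. Before anything else I would verify that $\tau^*$ is admissible of order $\mu$: $\tau^*(0)=0$ and $\tau^*\in C^\infty$ are immediate, while for $|\alpha|\geq 2$ we have $\partial^\alpha\tau^*(z)=(-1)^{|\alpha|}\partial^\alpha\tau(-z)$ so that \eqref{EQ:admiss-cond} transfers directly, and for $|\alpha|=1$ the extra bounded constant coming from differentiating the $z$-term is absorbed by $\langle z\rangle^\mu$ since $\mu\geq 0$.

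For the transpose, starting from
\[
\int(A_{\sigma,\tau}u)(x)\,v(x)\,dx=\iiint e^{\ii(x-y)\cdot\xi}\sigma(x+\tau(y-x),\xi)\,u(y)\,v(x)\,dx\,dy\,d\xi,
\]
I would relabel the dummy variables $x\leftrightarrow y$ and substitute $\xi\mapsto-\xi$ to arrive at
\[
A^{t}_{\sigma,\tau}v(x)=\iint e^{\ii(x-y)\cdot\xi}\sigma(y+\tau(x-y),-\xi)\,v(y)\,dy\,d\xi,
\]
and the identity for $\tau^*$ then rewrites the midpoint as $x+\tau^*(y-x)$, yielding the amplitude form claimed in the statement. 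Reading this as a $\tau^*$-quantization of the symbol $\sigma_t(v,\xi):=\sigma(v,-\xi)$ immediately gives $A^{t}_{\sigma,\tau}=A_{\sigma_t,\tau^*}$.

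To obtain the expansion \eqref{EQ:tauadj} as a $\tau$-quantization, I would treat $a(x,y,\xi):=\sigma(x+\tau^*(y-x),-\xi)$ as an amplitude: since $\sigma\in S^m$ and $\tau^*$ is admissible of order $\mu$, the remarks following \eqref{EQ:s-class2} give $a\in A^{m}_{\mu}$ (the sign flip in $\xi$ is harmless). Then Theorem~\ref{teorema da amplitude a simboli} applied with quantizing function $\tau$ produces exactly \eqref{EQ:tauadj}, with the $k_\delta$ from that theorem, individual terms in $S^{m-|\delta|}_{\mu(|\alpha|+|\beta|),\tau}$, and remainder in $A^{m-M}_{(\mu+\mu)M}=A^{m-M}_{2\mu M}$.

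The adjoint is handled along the same lines using the sesquilinear pairing $\int(Au)\overline{v}\,dx$. Swapping $x\leftrightarrow y$ and taking complex conjugates---no separate $\xi\mapsto-\xi$ step is needed here, since conjugating $e^{\ii(x-y)\cdot\xi}$ already flips the sign of $\xi$---yields
\[
A^{*}_{\sigma,\tau}v(x)=\iint e^{\ii(x-y)\cdot\xi}\overline{\sigma(y+\tau(x-y),\xi)}\,v(y)\,dy\,d\xi,
\]
which upon the $\tau^*$ identity is recognised as $A_{\bar\sigma,\tau^*}$. A second application of Theorem~\ref{teorema da amplitude a simboli} to $\overline{\sigma(x+\tau^*(y-x),\xi)}\in A^m_\mu$ then delivers \eqref{EQ:tauadj2} with the stated orders. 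The argument is largely bookkeeping; the only real subtlety is keeping track of the sign flip in $\xi$ (present for the transpose, absent for the adjoint) and confirming that the spatial order $\mu$ enters twice---once through $\tau^*$ in the input and once through $\tau$ in the output---which is what yields the $2\mu M$ in both remainders.
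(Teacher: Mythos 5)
Your proposal is correct and follows essentially the same route as the paper: compute the transpose and adjoint from the duality pairings using the identity $x+\tau(y-x)=y+\tau^{*}(x-y)$ (with the $\xi\mapsto-\xi$ flip only in the transpose case), recognise them as $\tau^{*}$-quantizations of $\sigma_t$ and $\overline{\sigma}$, and then apply Theorem \ref{teorema da amplitude a simboli} to the resulting amplitudes in $A^{m}_{\mu}$ to obtain the $\tau$-quantized expansions with remainders in $A^{m-M}_{2\mu M}$. Your explicit check that $\tau^{*}$ is admissible of order $\mu$ is a small addition the paper leaves implicit, but it does not change the argument.
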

\proof
We want to find an operator $A_{\sigma,{\tau}}^{t}$ such that
$$\langle A_{\sigma,{\tau}}u,v\rangle=\langle u,A_{\sigma,{\tau}}^{t}v\rangle \qquad u,v\in\mathcal{S}(\mathbb{R}^{d}),$$
in the sense of the distributional pairing. Using 
a change of variable $\xi\mapsto (-\xi)$ we can write
\begin{align*}\langle A_{\sigma,{\tau}}u,v\rangle=\int_{\mathbb{R}^{n}}\int_{\mathbb{R}^{n}}\int_{\mathbb{R}^{n}}e^{\ii(x-y)\cdot\xi}\sigma(x+\tau(y-x),\xi)u(y)v(x)dxdyd\xi&=&\\
\int_{\mathbb{R}^{n}}u(y)dy\int_{\mathbb{R}^{n}}\int_{\mathbb{R}^{n}}e^{\ii(x-y)\cdot\xi}\sigma(y+x-y+\tau(y-x),\xi)v(x)dxd\xi&=&\\
\int_{\mathbb{R}^{n}}u(y)dy\int_{\mathbb{R}^{n}}\int_{\mathbb{R}^{n}}e^{\ii(y-x)\cdot\xi}\sigma(y+\tau^{*}(x-y),-\xi)v(x)dxd\xi=\langle u,A_{\sigma,{\tau}}^{t}v\rangle.
\end{align*}
From this we see that the transpose operator $A_{\sigma,{\tau}}^{t}$ of $A_{\sigma,\tau}$ is the operator with symbol 
$\sigma(y+\tau^{*}(x-y),-\xi)$ with $\tau^{*}$ given by \eqref{EQ:newtau}. 
Consequently $A^{t}_{\sigma,{\tau}}$ can be rewritten in the following form switching $x$ and $y$:
$$
A_{\sigma,{\tau}}^{t}v(x)=\int_{\mathbb{R}^{n}}\int_{\mathbb{R}^{n}}e^{\ii(x-y)\cdot\xi}\sigma(x+\tau^{*}(y-x),-\xi) v (y) dyd\xi,
$$
and that $\tau^{*}$ still verifies the condition of Definition \ref{TAO AMMISSIBILE}. By 
Theorem \ref{teorema da amplitude a simboli} viewing it as an amplitude operator we can also view it as a $\tau^*$-quantization with the symbol having the asymptotic expansion given by \eqref{EQ:tauadj}.

Similarly, for the adjoint operator $A^*_{\sigma,\tau}$ we use the equality
\begin{align*}
\int_{\mathbb{R}^{d}}\int_{\mathbb{R}^{d}}\int_{\mathbb{R}^{d}}e^{\ii(x-y)\cdot\xi}\sigma(x+\tau(y-x),\xi)u(y)\overline{v(x)}dxdyd\xi&=&
\\ \int_{\mathbb{R}^{d}}u(y)dy\overline{\int_{\mathbb{R}^{d}}\int_{\mathbb{R}^{d}}e^{\ii(y-x)\cdot\xi}\overline{\sigma(x+\tau(y-x),\xi)}v(x)dxd\xi}&=&\\
\int_{\mathbb{R}^{d}}u(y)dy\overline{\int_{\mathbb{R}^{d}}\int_{\mathbb{R}^{d}}e^{\ii(y-x)\cdot\xi}\overline{\sigma(y+x-y+\tau(y-x),\xi)}v(x)dxd\xi}&=&\\
\int_{\mathbb{R}^{d}}u(y)dy\overline{\int_{\mathbb{R}^{d}}\int_{\mathbb{R}^{d}}e^{\ii(y-x)\cdot\xi}\overline{\sigma(y+\tau^{*}(x-y),\xi)}v(x)dxd\xi}.
\end{align*}
We see from this that the adjoint 
$A_{\sigma,\tau}^*$ is a $\tau^*$-quantized pseudo-differential operator (i.e. its quantization function is $\tau^*$ and symbol $\overline{\sigma}$.  
By 
Theorem \ref{teorema da amplitude a simboli} viewing it as an amplitude operator we can also view it as a $\tau^*$-quantization with the symbol having the asymptotic expansion given by \eqref{EQ:tauadj2}.
\endproof

We are now ready to prove also the composition formula for $\tau$-quantized symbols. For this, we will freely rely on Corollary \ref{COROLLARIO PASSAGGIO} allowing one to change quantizations whenever it is convenient.

\begin{theorem}\label{COMPOSIZIONE}
Let $\tau_1,\tau_2,\tau_3$ be admissible quantizing function of corresponding orders $\mu_1,\mu_2,\mu_3\geq 0$. Let
$A_1$, $A_2$ be two operators associated respectively to two $\tau_1$- and $\tau_2$-quantized symbols in $S^{m_1}$ and $S^{m_2}$. Then their composition $A_1\circ A_2$ can be viewed as a $\tau_3$-quantized operator with the symbol $\sigma$, for any $N\in\N$
having the weak asymptotic expansion
\begin{multline*}
\sigma(x+\tau_3(y-x),\xi)\sim\\ \sum_{\alpha,\beta\geq 0}\sum_{|\alpha|+|\beta|\leq |\delta|\leq N(|\alpha|+|\beta|)}\sum_{\gamma+\epsilon=\delta}k_{\delta}(\tau,\alpha,\beta,\gamma,\epsilon,x-y)[\partial_{\xi}^{\gamma}\partial_{x}^{\alpha}\sigma'_{KN}(x,\xi)\partial_{\xi}^{\epsilon}\partial_{y}^{\beta}\sigma''_{AKN}(y,\xi)]|_{x=y=v}.
\end{multline*}
with respective terms in $S^{m_1+m_2-(|\alpha|+|\beta|)}_{\mu_3(|\alpha|+|\beta|)}$ and remainder in $A^{m_1+m_2-M}_{(\mu_1+\mu_2+\mu_3)M}$, for any $M\geq 1$.
In the above formula, at the end we plug in $v=x+\tau_3(y-x)$ and, $\sigma_{KN}^{'}$ is the symbol one gets when expressing $A_1$ as an operator with Kohn-Nirenberg symbol as in Corollary \ref{COROLLARIO PASSAGGIO} and, similarly, $\sigma_{AKN}''$ is the symbol one gets when expressing $A_2$ as an operator with the anti-Kohn-Nirenberg symbol. Moreover, $k_\delta^{'}(\tau_3, \alpha, \beta,\gamma,\epsilon,x-y)= \frac{\delta!}{\gamma!\epsilon!} k_\delta(\tau_3, \alpha, \beta,x-y)$, with $k_\delta(\tau_3, \alpha, \beta,x-y)$ as in Theorem \ref{teorema da amplitude a simboli}.

In particular, if we take $\tau_1=\tau_2=\tau_3$ we see that the operator classes ${\rm OP}_\tau^0$ and $\cup_{m\in\R} {\rm OP}_\tau^m$ form algebras of operators, modulo remainders.
\end{theorem}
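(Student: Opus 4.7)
The strategy is to reduce the composition of two general $\tau$-quantized operators to the composition of a Kohn--Nirenberg operator with an anti-Kohn--Nirenberg operator---where the intermediate Fourier integration collapses cleanly---and then to convert the resulting amplitude operator into a $\tau_3$-quantization via Theorem~\ref{teorema da amplitude a simboli}. The asymmetric choice (KN for the left factor, AKN for the right) is precisely what makes the middle integration collapse.

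First I would apply Corollary~\ref{COROLLARIO PASSAGGIO} twice to rewrite $A_1$ as a Kohn--Nirenberg operator with symbol $\sigma'_{KN}$ and $A_2$ as an anti-Kohn--Nirenberg operator with symbol $\sigma''_{AKN}$. Since both target quantizing functions are linear and of order $0$, these passages introduce only the growth factors $\mu_1$ and $\mu_2$, producing weak expansions with remainders in $A^{m_1-M}_{\mu_1 M}$ and $A^{m_2-M}_{\mu_2 M}$. I would then compute the composition directly as an iterated oscillatory integral
$$
(A_1 A_2) u(x) = \int\!\!\int\!\!\int\!\!\int e^{\ii (x-y)\cdot\xi + \ii (y-z)\cdot\eta}\, \sigma'_{KN}(x,\xi)\, \sigma''_{AKN}(z,\eta)\, u(z)\, dy\, d\xi\, dz\, d\eta.
$$
Carrying out the $y$-integration produces $(2\pi)^n \delta(\xi-\eta)$---rigorously justified by a standard oscillatory integral reduction---collapsing the expression to the amplitude operator with amplitude $a(x,z,\xi) := \sigma'_{KN}(x,\xi)\, \sigma''_{AKN}(z,\xi) \in A^{m_1+m_2}_0$.

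Next I would apply Theorem~\ref{teorema da amplitude a simboli} to this amplitude with the target quantizing function $\tau_3$. Its expansion
$$
\sigma(x+\tau_3(y-x),\xi)\sim \sum_{\alpha,\beta}\sum_{|\alpha|+|\beta|\le|\delta|\le N(|\alpha|+|\beta|)} k_\delta(\tau_3,\alpha,\beta,x-y)\, \partial_x^\alpha \partial_y^\beta \partial_\xi^\delta a(v,v,\xi)\big|_{v=x+\tau_3(y-x)}
$$
immediately yields the stated structure: since $a$ factorises as $\sigma'_{KN}(x,\xi)\, \sigma''_{AKN}(y,\xi)$, the $x$- and $y$-derivatives act on separate factors, and the Leibniz expansion $\partial_\xi^\delta = \sum_{\gamma+\epsilon=\delta}\binom{\delta}{\gamma}\partial_\xi^\gamma \partial_\xi^\epsilon$ produces exactly the combinatorial factor $\delta!/(\gamma!\epsilon!)$ that distinguishes $k_\delta'$ from $k_\delta$ in the claim.

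The main technical chore---not a deep obstacle---is tracking how the three layers of remainders compose. At truncation level $M$, the residuals of $\sigma'_{KN}$ and $\sigma''_{AKN}$ lie in $A^{m_1-M}_{\mu_1 M}$ and $A^{m_2-M}_{\mu_2 M}$; multiplying them (and pairing top-order terms with residuals) places the product amplitude in $A^{m_1+m_2-M}_{(\mu_1+\mu_2)M}$, and Theorem~\ref{teorema da amplitude a simboli} then adds a further growth $\mu_3 M$ from the $\tau_3$-passage, giving precisely the claimed class $A^{m_1+m_2-M}_{(\mu_1+\mu_2+\mu_3)M}$. The algebra statement for $\tau_1=\tau_2=\tau_3=\tau$ follows at once: composition preserves the filtration $\bigcup_m \mathrm{OP}_\tau^m$ modulo smoothing remainders, and restricting to $m_1=m_2=0$ gives the closure of $\mathrm{OP}_\tau^0$ as an algebra.
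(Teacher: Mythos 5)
Your proposal is correct and follows essentially the same route as the paper: reduce $A_1$ to a Kohn--Nirenberg and $A_2$ to an anti-Kohn--Nirenberg representation via Corollary~\ref{COROLLARIO PASSAGGIO}, observe that the composition is then the amplitude operator with amplitude $\sigma'_{KN}(x,\xi)\sigma''_{AKN}(y,\xi)$ (the paper phrases the collapse through the Fourier transform of the anti-KN operator rather than a $\delta(\xi-\eta)$ reduction, which is the same computation), and convert to a $\tau_3$-quantization via Theorem~\ref{teorema da amplitude a simboli} with the Leibniz rule in $\xi$ giving the factor $\delta!/(\gamma!\epsilon!)$. Your bookkeeping of the remainder classes matches the statement, so no gap.
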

\proof
The proof follows almost verbatim \cite{Schubin} in the first part, while the second part has changes due to our different quantization. 
By using Corollary \ref{PASSAGGIOAKNAKNW} and Corollary \ref{COROLLARIO PASSAGGIO}, let us represent our operators $A_1$ and $A_2$ via Kohn-Nirenberg and anti-Kohn-Nirenberg symbols, respectively, modulo some errors which are smoothing operators, so that we can write
$$A_1v(x)=\int_{\mathbb{R}^{n}}\int_{\mathbb{R}^{n}} e^{\ii(x-y)\cdot\xi}\sigma_{KN}^{'}(x,\xi)v(y)dyd\xi+E'v(x)=:Iv(x)+E'v(x),$$
and
$$A_2u(x)=\int_{\mathbb{R}^{n}}\int_{\mathbb{R}^{n}} e^{\ii(x-y)\cdot\xi}\sigma_{AKN}^{''}(y,\xi)u(y)dyd\xi +E''u(x)=:IIu(x)+E''u(x),$$
where $E', E''$ are smoothing remainders as in Corollary \ref{PASSAGGIOAKNAKNW}.
Consequently, $Iv(x)$ can be rewritten as
\begin{equation}
Iv(x)=\int_{\mathbb{R}^{n}}e^{\ii x\cdot\xi}\sigma_{KN}^{'}(x,\xi)\hat{v}(\xi)d\xi,
\end{equation}
while the Fourier transform of $IIu(x)$ is 
$\widehat{IIu} (\xi)=\int_{\mathbb{R}^{n}}e^{-\ii y\cdot\xi}\sigma^{''}_{AKN}(y,\xi)u(y)dy$.
If we compose $I$ and $II$ we then have
\begin{eqnarray*}
I(IIu(x))&=&\int_{\mathbb{R}^{n}}e^{\ii x\cdot\xi}\sigma_{KN}^{'}(x,\xi)\widehat{IIu}(\xi)d\xi\\
&=&\int_{\mathbb{R}^{n}} \int_{\mathbb{R}^{n}}e^{\ii(x-y)\cdot\xi}\sigma_{KN}^{'}(x,\xi)\sigma_{AKN}^{''}(y,\xi)u(y)dyd\xi.
\end{eqnarray*}
Therefore we have that $I\circ II$ is the operator with the amplitude $\sigma_{KN}^{'}(x,\xi)\sigma_{AKN}^{''}(y,\xi)$ of order $m_1+m_2$. In turn, this can be also written as a $\tau_3$-quantized symbol, $\sigma(x+\tau_3(y-x))$, with the asymptotic expansion given by Theorem \ref{teorema da amplitude a simboli}. More precisely,
with $v=x+\tau_3(y-x)$, we have
\begin{eqnarray*}
&&  \sigma(x+\tau_3(y-x), \xi) \\
&\sim& \sum_{\alpha,\beta\geq 0}\sum_{|\alpha| + |\beta|\le|\delta | \le N\cdot(|\alpha| + |\beta|)}k_\delta(\tau_3,\alpha,\beta,x-y)[\partial_{x}^{\alpha}\partial_{y}^{\beta}\partial_{\xi}^{\delta}\left(\sigma_{KN}^{'}(x,\xi)\sigma_{AKN}^{''}(y,\xi)\right)]_{x=y=v}\\
&=&\sum_{\alpha,\beta\geq 0}\sum_{|\alpha| + |\beta|\le|\delta | \le N\cdot(|\alpha| + |\beta|)}k_\delta(\tau_3, \alpha, \beta,x-y)  \partial_\xi^\delta [\partial_x^\alpha \sigma_{KN}^{'}(x,\xi) \partial_y^\beta \sigma_{AKN}^{''}(y,\xi)]|_{x=y=v}\\
&=&\sum_{\alpha,\beta\geq 0}\sum_{|\alpha| + |\beta|\le|\delta | \le N\cdot(|\alpha| + |\beta|)}k_\delta(\tau_3, \alpha, \beta,x-y)\times 
\\& & \times \sum_{\gamma+\epsilon=\delta}\frac{\delta!}{\gamma!\epsilon!} [\partial_\xi^\gamma \partial_x^\alpha \sigma_{KN}^{'}(x,\xi)\partial_\xi^\epsilon \partial_y^\beta \sigma_{AKN}^{''}(y,\xi)]\\
&=&\sum_{\alpha,\beta\geq 0}\sum_{|\alpha| + |\beta|\le|\delta | \le N\cdot(|\alpha| + |\beta|)}
\sum_{\gamma+\epsilon=\delta}
k_\delta^{'}(\tau_3, \alpha, \beta,\gamma,\epsilon,x-y)\times\\ & & \times  [\partial_\xi^\gamma \partial_x^\alpha \sigma_{KN}^{'}(x,\xi)\partial_\xi^\epsilon \partial_y^\beta \sigma_{AKN}^{''}(y,\xi)]_{x=y=v},
\end{eqnarray*}
where we have used  in the third line Leibniz formula with respect to $\xi$ and also $k_\delta^{'}(\tau_3, \alpha, \beta,\gamma,\epsilon,x-y):=k_\delta(\tau_3, \alpha, \beta,x-y)\cdot \frac{\delta!}{\gamma!\epsilon!}$ in the last equality.
\endproof

We now discuss elliptic $\tau$-quantized operators and their parametrices.

\begin{proposition}\label{PROP:parametrix}
Let $\tau$ be an admissible quantizing function of order $\mu\geq 0$, and let $\sigma\in S^{m}$ be an elliptic symbol, that is, 
$$|\sigma(x,\xi)|\ge C(1+|\xi|)^{m} \; \textrm{ for all } \; x,\xi\in\Rn, \; |\xi|\ge R_0,$$
for some $R_0>0$.
Then there exists a symbol $\kappa\in S^{-m}$ such that 
\begin{equation}\label{parametrix}
A_{\kappa,{\tau}}A_{\sigma,{\tau}}=I+R
\end{equation}
and
\begin{equation}\label{parametrix2}
A_{\sigma,{\tau}}A_{\kappa,{\tau}}=I+S,
\end{equation}
where $A_{\sigma, {\tau}}$ and $A_{\kappa,{\tau}}$
are $\tau$-quantized operators as in \eqref{EQ:quant},
and $R$ and $S$ are smoothing pseudo-differential operators. 
\end{proposition}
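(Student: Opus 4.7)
The plan is to construct a left parametrix by the classical Levi iteration, then obtain a two-sided parametrix by the standard uniqueness argument. The main tools are the composition formula of Theorem \ref{COMPOSIZIONE} (applied with $\tau_1=\tau_2=\tau_3=\tau$) and asymptotic summation in $S^m$, together with Lemma \ref{EQUIVALENZA} and Theorem \ref{teorema da amplitude a simboli} to bring remainders back into the $\tau$-quantized framework.

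\textbf{Step 1 (zeroth approximation).} Fix $\chi\in C^\infty(\R^n)$ with $\chi(\xi)=0$ for $|\xi|\le R_0$ and $\chi(\xi)=1$ for $|\xi|\ge 2R_0$, and set
\[
\kappa_0(x,\xi):=\frac{\chi(\xi)}{\sigma(x,\xi)}.
\]
On $\mathrm{supp}\,\chi$ we have $|\sigma|\gtrsim\langle\xi\rangle^m$; a routine Leibniz argument on $1/\sigma$ then yields $\kappa_0\in S^{-m}$.

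\textbf{Step 2 (Levi iteration).} Apply Theorem \ref{COMPOSIZIONE} to $\kappa_0\in S^{-m}$ and $\sigma\in S^m$ with the common quantization $\tau$. The Kohn--Nirenberg and anti-Kohn--Nirenberg representatives $\sigma_{KN}'$ and $\sigma_{AKN}''$ agree with $\kappa_0$ and $\sigma$ respectively modulo symbols of one order lower (Corollary \ref{COROLLARIO PASSAGGIO}, where $k_0(\tau,0,0,\cdot)=1$), so the $|\alpha|+|\beta|=0$ term of the asymptotic expansion is $\kappa_0(x,\xi)\sigma(x,\xi)=\chi(\xi)$, which equals $1$ modulo $S^{-\infty}$. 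All remaining terms have $|\alpha|+|\beta|\ge1$, hence $|\delta|\ge1$, and contribute symbols of order at most $-1$. Together with Lemma \ref{EQUIVALENZA} (used to absorb the polynomial $\langle x-y\rangle$ growth weakly), we obtain
\[
A_{\kappa_0,\tau}\circ A_{\sigma,\tau}=I+A_{r_1,\tau},\qquad r_1\in S^{-1}.
\]
Assume recursively that $\kappa_0,\dots,\kappa_{N-1}$ with $\kappa_j\in S^{-m-j}$ have been constructed such that
\[
A_{\kappa_0+\dots+\kappa_{N-1},\tau}\circ A_{\sigma,\tau}=I+A_{r_N,\tau},\qquad r_N\in S^{-N}.
\]
Define $\kappa_N:=-\chi(\xi)r_N(x,\xi)/\sigma(x,\xi)\in S^{-m-N}$. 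Composing once more by Theorem \ref{COMPOSIZIONE}, the principal symbol of $A_{\kappa_N,\tau}\circ A_{\sigma,\tau}$ is $-r_N$ modulo $S^{-N-1}$, so adding it to the previous expression yields a new remainder $r_{N+1}\in S^{-N-1}$.

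\textbf{Step 3 (summation and right parametrix).} By the standard asymptotic summation theorem in $S^m$ there exists $\kappa\in S^{-m}$ with $\kappa\sim\sum_{j\ge0}\kappa_j$, i.e.\ $\kappa-\sum_{j<N}\kappa_j\in S^{-m-N}$ for every $N$. Since Theorem \ref{COMPOSIZIONE} admits a remainder of arbitrarily negative order, it follows that
\[
A_{\kappa,\tau}\circ A_{\sigma,\tau}=I+R,\qquad R\in\bigcap_{N\ge0}OP_\tau S^{-N},
\]
so $R$ is smoothing, proving \eqref{parametrix}. Running the analogous recursion on the right, starting from the same $\kappa_0$ but asking that the principal symbol of $A_{\sigma,\tau}\circ A_{\tilde\kappa_N,\tau}$ cancel the right-hand remainder, produces $\tilde\kappa\in S^{-m}$ with $A_{\sigma,\tau}\circ A_{\tilde\kappa,\tau}=I+\tilde S$ and $\tilde S$ smoothing. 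The standard two-sided identity $A_{\kappa,\tau}\equiv A_{\kappa,\tau}\circ A_{\sigma,\tau}\circ A_{\tilde\kappa,\tau}\equiv A_{\tilde\kappa,\tau}$ modulo smoothing then shows that $\kappa$ also satisfies \eqref{parametrix2}.

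\textbf{Main obstacle.} The algebraic iteration is classical; the delicate point is that Theorem \ref{COMPOSIZIONE} produces only a \emph{weak} asymptotic expansion with remainders in the amplitude classes $A^{m-M}_{(\mu_1+\mu_2+\mu_3)M}$, which carry polynomial growth in $x-y$. One must therefore verify at each step that the remainder $A_{r_N,\tau}$ can genuinely be identified, up to a smoothing error, with a $\tau$-quantized operator whose symbol lies in the pure class $S^{-N}$, and that the asymptotic summation commutes with $\tau$-quantization modulo smoothing. Both points are exactly what Lemma \ref{EQUIVALENZA} and Theorem \ref{teorema da amplitude a simboli} were set up to provide; once this bookkeeping is in place the construction runs parallel to the classical H\"ormander parametrix.
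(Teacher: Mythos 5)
Your construction is genuinely different from the paper's. You run the classical Levi/Neumann iteration entirely inside the $\tau$-calculus: invert the principal symbol, compose via Theorem \ref{COMPOSIZIONE} with $\tau_1=\tau_2=\tau_3=\tau$, iterate, and sum asymptotically. The paper instead does no iteration in the $\tau$-calculus at all: it uses Theorem \ref{teorema da amplitude a simboli}/Corollary \ref{COROLLARIO PASSAGGIO} once to write $A_{\sigma,\tau}=A_{\sigma_1,KN}+R_1$ with $\sigma_1\in S^m$ still elliptic, takes the \emph{classical} Kohn--Nirenberg parametrix $A_{\kappa_1,KN}$ (citing the standard references), transfers $A_{\kappa_1,KN}$ back to a $\tau$-quantization $A_{\kappa,\tau}=A_{\kappa_1,KN}+R_3$, and multiplies out the four terms, collecting all errors into the smoothing operator $R$. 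The payoff of the paper's route is that the whole recursive construction happens in the strong, fully developed KN calculus, so only two changes of quantization are ever needed; your route buys a self-contained argument within the new calculus but at the price of infinitely many applications of the weak composition theorem.

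That price is exactly where your argument has a soft spot, which you name but then wave through. To define $\kappa_N:=-\chi r_N/\sigma$ you need $r_N$ to be a genuine symbol in $S^{-N}$, i.e.\ with estimates $|\partial_x^\alpha\partial_\xi^\gamma r_N(x,\xi)|\lesssim\langle\xi\rangle^{-N-|\gamma|}$ in the pure H\"ormander sense. But Theorem \ref{COMPOSIZIONE} only delivers a \emph{weak} expansion whose terms for $|\alpha|+|\beta|\ge 1$ carry the factors $k_\delta(\tau,\alpha,\beta,x-y)$, hence lie in the growth classes $S^{\,\cdot}_{\mu(|\alpha|+|\beta|)}$ rather than in $S^{\,\cdot}$, and whose remainders lie in amplitude classes $A^{\,\cdot}_{(\mu_1+\mu_2+\mu_3)M}$; Lemma \ref{EQUIVALENZA} then only identifies these, up to a prescribed finite number of derivatives, with amplitudes in $A^{\,\cdot,k}$ --- it does not produce an $(x,\xi)$-symbol in $S^{-N}$. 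So the claim that ``$A_{\kappa_0,\tau}\circ A_{\sigma,\tau}=I+A_{r_N,\tau}$ with $r_N\in S^{-N}$'' is not a direct consequence of the results you cite, and the same issue recurs at every step of the recursion and again when you sum $\sum_j\kappa_j$. The statement is surely reachable along your lines, but closing it requires an additional reduction (amplitude to pure symbol, with control of all derivatives, uniformly along the iteration) that the paper deliberately avoids by performing the parametrix construction in the Kohn--Nirenberg quantization and only translating the end result back to the $\tau$-quantization.
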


\begin{proof}
We will prove \eqref{parametrix} since the proof of \eqref{parametrix2} is analogous.
In view of Theorem \ref{teorema da amplitude a simboli} we can write
\begin{equation}\label{PASSAGGIO}
A_{\sigma,{\tau}}=A_{\sigma_1,{KN}}+R_1,
\end{equation}
where $R_1$ is a smoothing operator and $A_{\sigma_1,{KN}}$ is the Kohn-Nirenberg quantized pseudo-differential operator with the corresponding symbol $\sigma_1\in S^m$ with the asymptotic expansion given in Corollary \ref{COROLLARIO PASSAGGIO}. It follows also that the symbol $\sigma_1$ is elliptic, so that using the usual parametrix for Kohn-Nirenberg quantization (see e.g.  \cite{Lerner}, \cite{Ruz}), there exists an operator $A_{\kappa_1,{KN}}$ such that
\begin{equation}\label{PARAMETRICA}
A_{\kappa_1,{KN}}A_{\sigma_1,{KN}}=I+R_2,
\end{equation}
for some $\kappa_1\in S^{-m}$, and where $R_2$ is a smoothing operator.
Again applying Corollary \ref{COROLLARIO PASSAGGIO}  to $A_{\kappa_1,KN}$ we have
$$A_{\kappa,{\tau}}=A_{\kappa_1,{KN}}+R_3,$$
for some $\kappa\in S^{-m}$.
Using this together with \eqref{PASSAGGIO} and \eqref{PARAMETRICA}, we have
\begin{eqnarray*}
A_{\kappa,{\tau}}A_{\sigma,{\tau}}&=&(A_{\kappa_1,{KN}}+R_3)(A_{\sigma_1,{KN}}+R_1)\\
&=&A_{\kappa_1,{KN}} A_{\sigma_1,{KN}}+A_{\kappa_1,{KN}}R_1+R_3 A_{\sigma_1,{KN}} +R_3 R_1\\
&=&I+R_2+A_{\kappa_1,{KN}}R_1+R_3 A_{\sigma_1,{KN}} +R_3 R_1\\
&=&I+R,
\end{eqnarray*}
where we have set $R:=R_2+A_{\kappa_1,{KN}}R_1+R_3 A_{\sigma_1,{KN}} +R_3 R_1$, which has the symbol in $S^{-\infty}$.
\end{proof}

Similar to the proof of Proposition \ref{PROP:parametrix}, using Corollary \ref{COROLLARIO PASSAGGIO} we can extend other results for pseudo-differential operators to the $\tau$-quantizations and to $S^m_{\mu,\tau}$-classes. Let us give the G\r{a}rding inequality as an example.
 
\begin{proposition}[G\r{a}rding inequality]
Let $\tau$ be an admissible quantizing function as in Definition \ref{TAO AMMISSIBILE}, and assume that  a symbol  $\sigma\in S^{2m}$, $m\in\mathbb R$, satisfies
\begin{equation}\label{EQ:Garding-condition}
{\rm Re}\, \sigma(x,\xi)\ge C(1+|\xi|)^{2m},\qquad |\xi|\ge R,
\end{equation} 
for some $R>0$ and all $x\in\Rn$.
Then for any $s\in\R$ there exist positive constants $C_1$ and $C_2$ such that 
\begin{equation}\label{EQ:Garding-est}
 {\rm Re}\, (A_{\sigma,{\tau}}u,u)\ge C_1\|u\|^{2}_{H^m}-C_2\|u\|^{2}_{H^s},
\end{equation}
for all $u\in H^m(\Rn)$. 
\end{proposition}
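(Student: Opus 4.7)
The plan is to reduce the estimate to the classical Gårding inequality for the Kohn-Nirenberg quantization via Corollary \ref{COROLLARIO PASSAGGIO}, in the same spirit as the proof of Proposition \ref{PROP:parametrix}.

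First, I would apply Corollary \ref{COROLLARIO PASSAGGIO} with $\tau_{1}=\tau$ and $\tau_{2}\equiv 0$ to write
$$
A_{\sigma,\tau}=A_{\sigma_{KN},KN}+R_{M},
$$
where $\sigma_{KN}\in S^{2m}$ and $R_{M}$ is an amplitude operator whose amplitude lies in $A^{2m-M}_{\mu M}$, for any integer $M\geq 1$ we choose. The leading ($\alpha=\beta=\delta=0$) term of the asymptotic expansion for $\sigma_{KN}$ equals $\sigma$ itself, because $k_{0}(\tau,0,0,x-y)\equiv 1$, so $\sigma_{KN}-\sigma\in S^{2m-1}$. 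Combined with $|\sigma_{KN}(x,\xi)-\sigma(x,\xi)|\lesssim\langle\xi\rangle^{2m-1}$ and hypothesis \eqref{EQ:Garding-condition}, this yields the transferred positivity
$$
\mathrm{Re}\,\sigma_{KN}(x,\xi)\geq \tfrac{C}{2}(1+|\xi|)^{2m},\qquad |\xi|\geq R',\ x\in\Rn,
$$
for some $R'\geq R$.

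Next, I would invoke the classical Gårding inequality for the Kohn-Nirenberg quantization of symbols in $S^{2m}$ (see e.g.\ the references cited for Proposition \ref{PROP:parametrix}) to produce $C_{1},\widetilde{C}_{2}>0$ such that
$$
\mathrm{Re}\,(A_{\sigma_{KN},KN}u,u)\geq C_{1}\|u\|_{H^{m}}^{2}-\widetilde{C}_{2}\|u\|_{H^{s}}^{2}
$$
for every $u\in H^{m}(\Rn)$. To absorb $R_{M}$ into the $\|u\|_{H^{s}}^{2}$ term, I would apply Lemma \ref{EQUIVALENZA} to replace the amplitude of $R_{M}$ by an equivalent one in $A^{2m-M,k}$ for any chosen $k\in\N$. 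Taking $M$ and $k$ sufficiently large in terms of $m$ and $s$, standard $L^{2}$-type boundedness for amplitude operators of order $2m-M$ (in the spirit of Theorem \ref{THM:CV}) gives $R_{M}\colon H^{s}\to H^{-s}$ bounded, hence $|(R_{M}u,u)|\leq C_{2}''\|u\|_{H^{s}}^{2}$. Combining with the previous estimate produces the claim with $C_{2}:=\widetilde{C}_{2}+C_{2}''$.

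The most delicate step is the control of $R_{M}$: its amplitude carries the spatial factor $\langle x-y\rangle^{\mu M}$, whose growth worsens as $M$ increases. The key device is Lemma \ref{EQUIVALENZA}, which trades this growth for a condition on finitely many $x,y$-derivatives while preserving the $\xi$-order; once $M$ is chosen sufficiently large, $R_{M}$ becomes arbitrarily smoothing between the relevant Sobolev spaces and its contribution is harmless. Once this reduction is in place, the argument is a clean combination with the well-known Kohn-Nirenberg Gårding inequality, without needing to revisit the full $\tau$-quantization machinery.
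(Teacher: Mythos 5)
Your proposal follows essentially the same route as the paper's proof: rewrite $A_{\sigma,\tau}$ as a Kohn--Nirenberg operator via Corollary \ref{COROLLARIO PASSAGGIO}, observe that the positivity condition \eqref{EQ:Garding-condition} is preserved (since the leading term of the expansion is $\sigma$ itself and the corrections are of lower order), apply the classical G{\aa}rding inequality, and absorb the remainder into the $\|u\|_{H^s}^2$ term. Your treatment is correct and in fact spells out the handling of the remainder (via Lemma \ref{EQUIVALENZA} and a choice of large $M$) in more detail than the paper does.
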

\begin{proof}
First, using Corollary \ref{COROLLARIO PASSAGGIO} we can rewrite $A_{\sigma,{\tau}}$ as a Kohn-Nirenberg operator with some symbol $\sigma_1\in S^{2m}$. We note that the ellipticity condition is preserved by such transformation. Consequently, the estimate \eqref{EQ:Garding-est} follows from the usual G\r{a}rding inequality (see e.g. \cite[Theorem 6.1]{Taylor}), with the remainder only influencing the constant $C_2$.
\end{proof}

\section{$\tau$-quantizations with bounded derivatives}\label{41}

The special case of admissible quantizing functions from Definition \ref{TAO AMMISSIBILE} are the functions that have growth of the first order in $x$, while still being (possibly) nonlinear. In this case one can always work  with the usual H\"ormander classes $S^m$ without going to $S^m_d$. For such quantized functions we set $\mu=0$ in Definition \ref{TAO AMMISSIBILE}.
For clarity, let us make this explicit:

\begin{definition}\label{TAO AMMISSIBILE2}
A function $\tau:\mathbb{R}^{n}\rightarrow\mathbb{R}^{n}$ will be called {\em admissible with bounded derivatives} if $\tau\in C^{\infty}$ and $\tau(0)=0$, and if 
\begin{equation}
\sup_{x\in\Rn}|\partial_{x}^{\alpha}\tau(x)| <\infty
\end{equation}
holds for all multi-indices $\alpha\in\mathbb{N}^{n}_0\backslash\{0\}$. 
\end{definition}
We note that by the Taylor expansion formula these assumptions actually imply that $|\tau(x)|\lesssim\langle x\rangle.$

Now if we compose our $\tau$ function as in Definition \ref{TAO AMMISSIBILE2} with a symbol in $S^m$ we get an amplitude in $A^m=A^m_0$, i.e. if $\tau$ is admissible with bounded derivatives and $\sigma \in S^m$, then the function $a(x, y, \xi) := \sigma (x + \tau(y- x), \xi)$ belongs to the class $A^m$. This follows by the repeated application of the chain rule.

In this section we are going to discuss another approach to move from a $\tau$-quan\-ti\-zation  of an operator with symbol in $S^{m}$ to another quantization. More precisely, instead of passing from one quantization to another through an amplitude operator as we did in the proof of Corollary \ref{COROLLARIO PASSAGGIO}, we are going to consider the operator
\begin{equation*}\label{pseudo}
Au(x)=\int_{\R^n}\int_{\R^n}e^{\ii(x-y)\cdot\xi}\sigma(x+\tau(y-x),\xi)u(y)dyd\xi
\end{equation*}
and to perform a change of variable with respect to the space variable in the function 
$\sigma(x+\tau(y-x),\xi)$
inside the integral.
To do so it is natural to consider $x$ fixed and to ask for the invertibility of the mapping 
\begin{eqnarray}\label{taustar}
\tau_x:y\mapsto x+\tau(y-x).
\end{eqnarray}
To have $\tau_x$ invertible we can use the following criterion, see e.g.  \cite{RuzSug} and historical references therein, for this remarkable property:

\begin{theorem}[Hadamard]\label{THM:Hadamard}
A $C^1$ map $f:\mathbb{R}^{n}\rightarrow\mathbb{R}^{n}$ is a global $C^1$-diffeomorphism if and only if the Jacobian $\det \frac{\partial f(x)}{\partial x}$ never vanishes and $|f(x)|\rightarrow\infty$ whenever $|x|\rightarrow\infty$.
\end{theorem}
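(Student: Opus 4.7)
The plan is to address the two implications separately, with the reverse direction being the substantial one.

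For necessity, suppose $f$ is a global $C^1$-diffeomorphism. Differentiating $f\circ f\inv=\mathrm{id}$ via the chain rule gives $\det\frac{\partial f}{\partial x}(x)\neq 0$ at every $x\in\Rn$. Moreover, if some sequence $x_k$ with $|x_k|\to\infty$ had $f(x_k)$ bounded, then by passing to a subsequence $f(x_k)\to y$, and the continuity of $f\inv$ would force $x_k\to f\inv(y)$, contradicting $|x_k|\to\infty$. Hence $|f(x)|\to\infty$ as $|x|\to\infty$.

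For sufficiency, I would combine three standard ingredients. First, the nonvanishing Jacobian together with the inverse function theorem implies that $f$ is a local $C^1$-diffeomorphism at every point, and in particular an open map. Second, the condition $|f(x)|\to\infty$ as $|x|\to\infty$ is equivalent (by Heine--Borel in $\Rn$) to $f$ being proper, i.e.\ preimages of compact sets are compact. Third, I would invoke the topological principle that a proper local homeomorphism between connected, locally compact Hausdorff spaces is a covering map, together with the fact that every covering of the simply connected space $\Rn$ is trivial. Combining these yields that $f$ is a bijection, and the local diffeomorphism property then makes $f\inv$ globally $C^1$.

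The main obstacle is the covering-map step, so I would carry it out directly rather than citing it. For any $y\in\Rn$ the fiber $f\inv(y)$ is compact (by properness) and discrete (since $f$ is injective on a neighborhood of each preimage), hence finite, say $f\inv(y)=\{x_1,\dots,x_k\}$. I would pick pairwise disjoint open neighborhoods $U_i\ni x_i$ on each of which $f$ restricts to a diffeomorphism onto its image, and then use properness to produce a neighborhood $V$ of $y$ with $f\inv(V)\subset\bigcup_{i=1}^k U_i$; indeed, if no such $V$ existed one could extract points $y_j\to y$ with preimages $z_j$ outside $\bigcup_i U_i$, and properness would force a subsequence $z_j\to z\in f\inv(y)\setminus\bigcup_i U_i$, a contradiction. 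Such a $V$ is evenly covered, so $f$ is a covering map. Connectedness of $\Rn$ makes the number of sheets $k$ constant in $y$, and simple connectedness of $\Rn$ forces $k=1$, giving the global bijectivity.
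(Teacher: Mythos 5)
Your argument is correct, but note that the paper does not prove this statement at all: Theorem \ref{THM:Hadamard} is quoted as a classical result (Hadamard, sometimes Hadamard--Caccioppoli), with the reader referred to \cite{RuzSug} and the references therein. So there is no ``paper proof'' to match; what you have supplied is a complete, self-contained proof along the standard lines: necessity via the chain rule and continuity of $f^{-1}$, and sufficiency via the equivalence of the coercivity condition with properness, the inverse function theorem to get a local $C^1$-diffeomorphism, and the topological fact that a proper local homeomorphism of $\mathbb{R}^n$ is a covering map of the simply connected space $\mathbb{R}^n$, hence a homeomorphism. Two small points deserve one extra line each if you write this up. First, in the evenly-covered step, producing $V$ with $f^{-1}(V)\subset\bigcup_i U_i$ is not quite enough: you should further shrink $V$ to $V\cap\bigcap_{i=1}^k f(U_i)$ (open, containing $y$, since each $f(U_i)$ is open by the local diffeomorphism property), so that each $U_i\cap f^{-1}(V)$ maps \emph{onto} $V$. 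Second, surjectivity should be made explicit: either observe that the image is nonempty, open (local homeomorphism) and closed (properness), hence all of $\mathbb{R}^n$, or note that your local argument also applies to empty fibers, so the sheet number is locally constant with values in $\mathbb{N}_0$, constant by connectedness, and positive since the image is nonempty; simple connectedness then forces the constant to be $1$. With these routine adjustments your proof is a valid and standard justification of the theorem the paper only cites.
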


In our case, in order to have $\tau_x$ invertible we then ask for $|\tau_x(y)|\rightarrow\infty$ whenever $|y|\rightarrow\infty$, which is equivalent to asking that $|\tau(z)|\rightarrow\infty$ whenever $|z|\rightarrow\infty$.
One can also readily check the other properties in the following

\begin{corollary}\label{COR:hadamard-tau}
Let $\tau:\mathbb{R}^{n}\rightarrow\mathbb{R}^{n}$ be an admissible quantizing function with bounded derivatives. Assume that the Jacobian $\det \frac{\partial \tau(x)}{\partial x}$ never vanishes and that $|\tau(x)|\rightarrow\infty$ whenever $|x|\rightarrow\infty$. 
Then both $\tau$ and $\tau_x$ for any $x\in\Rn$ are global diffeomorphisms, with their inverses satisfying $|\tau^{-1}(y)|\to\infty$ and $|\tau_x^{-1}(y)|\to\infty$ whenever $|y|\to\infty.$ 

Moreover, if $\inf_{x\in\Rn} |\det \frac{\partial \tau(x)}{\partial x}|\not=0$ then $\tau^{-1}$ is also an admissible quantizing function with bounded derivatives.
\end{corollary}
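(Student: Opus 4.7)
The argument has three parts, corresponding to the three assertions of the corollary. First I would apply Hadamard's theorem (Theorem \ref{THM:Hadamard}) to $\tau$ directly: the nonvanishing Jacobian and the radial growth $|\tau(x)|\to\infty$ are exactly the hypotheses, so $\tau$ is a global $C^1$-diffeomorphism; since $\tau\in C^\infty$ with nonvanishing Jacobian, the inverse function theorem locally promotes this to a global $C^\infty$-diffeomorphism. For $\tau_x$, rather than reapplying Hadamard I would simply observe that $\tau_x=T_x\circ\tau\circ T_{-x}$, where $T_a(z):=z+a$ is translation, so $\tau_x$ is automatically a global $C^\infty$-diffeomorphism with $\tau_x^{-1}(y)=x+\tau^{-1}(y-x)$.

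For the growth of the inverse, I would argue by contradiction: if $|y_k|\to\infty$ while $\{\tau^{-1}(y_k)\}$ stays bounded, extract a convergent subsequence $\tau^{-1}(y_{k_j})\to z_0$; continuity of $\tau$ then forces $y_{k_j}=\tau(\tau^{-1}(y_{k_j}))\to\tau(z_0)$, contradicting $|y_k|\to\infty$. The corresponding statement for $\tau_x^{-1}$ then follows from the identity above since $|\tau_x^{-1}(y)|\ge|\tau^{-1}(y-x)|-|x|$ and $|y-x|\to\infty$ whenever $|y|\to\infty$.

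Third, and this is the heart of the statement, I would show that under $\inf_{x}|\det D\tau(x)|\ne 0$ the inverse $\psi:=\tau^{-1}$ is admissible with bounded derivatives in the sense of Definition \ref{TAO AMMISSIBILE2}. That $\psi(0)=0$ is immediate from $\tau(0)=0$, and $\psi\in C^\infty$ from the preceding step; the substance is the uniform bound on $\partial^\alpha\psi$ for $|\alpha|\ge 1$. The plan is to differentiate the identity $\tau(\psi(y))=y$ and induct on $|\alpha|$. For $|\alpha|=1$ one has $D\psi(y)=[D\tau(\psi(y))]^{-1}$; each entry of the inverse matrix is a cofactor divided by $\det D\tau$, the cofactors are polynomials in the entries of $D\tau$ (uniformly bounded by Definition \ref{TAO AMMISSIBILE2}), and $|\det D\tau|\ge c>0$ is the extra hypothesis, so $D\psi$ is uniformly bounded. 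For $|\alpha|\ge 2$, Fa\`a di Bruno's formula applied to $\tau\circ\psi=\mathrm{id}$ isolates the top-order contribution $D\tau(\psi(y))\cdot\partial^\alpha\psi(y)$ against a polynomial combination of terms of the form $\partial^\beta\tau(\psi(y))$ with $1\le|\beta|\le|\alpha|$ and $\partial^\gamma\psi(y)$ with $|\gamma|<|\alpha|$; multiplying through by $[D\tau(\psi(y))]^{-1}$ and invoking the inductive hypothesis together with the bounds above yields a uniform bound on $\partial^\alpha\psi$. The only obstacle is combinatorial bookkeeping in Fa\`a di Bruno's expansion; the essential analytic input is the uniform invertibility of $D\tau$, which is precisely what the assumption $\inf_{x}|\det D\tau(x)|\ne 0$, together with bounded derivatives of $\tau$, provides.
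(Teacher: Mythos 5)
Your proposal is correct and follows essentially the route the paper intends: the paper simply invokes Hadamard's theorem (Theorem \ref{THM:Hadamard}) and declares the remaining properties ``readily checked'', and your write-up supplies exactly those details --- the conjugation $\tau_x=T_x\circ\tau\circ T_{-x}$ (equivalent to applying Hadamard to $\tau_x$ directly, as the paper does), the compactness argument for $|\tau^{-1}(y)|\to\infty$, and the induction via $D\psi(y)=[D\tau(\psi(y))]^{-1}$ and Fa\`a di Bruno for the bounded derivatives of $\tau^{-1}$, where the hypothesis $\inf_x|\det\frac{\partial\tau(x)}{\partial x}|\neq 0$ enters precisely as you say.
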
 

Using the above observations we have the following property:

\begin{proposition}\label{PROPCAMBIO}
Let $\tau$ be an admissible quantizing function with bounded derivatives such that $\inf_{x\in\Rn} |\det \frac{\partial \tau(x)}{\partial x}|\not=0$ and $|\tau(x)|\rightarrow\infty$ whenever $|x|\rightarrow\infty$. 
Let $\sigma\in S^m$.
Then the operator
\begin{equation}\label{EQ:tauop1}
A_{\sigma,\tau}u(x)=\int_{\R^n}\int_{\R^n}e^{\ii(x-y)\cdot\xi}\sigma(x+\tau(y-x),\xi)u(y)dyd\xi
\end{equation} 
can be written in the Kohn-Nirenberg type form composed with the change of variables, i.e.
\begin{equation}\label{EQ:tauop2}
A_{\sigma,\tau}u(x)=\int_{\R^n}\int_{\R^n}e^{\ii(x-y)\cdot\xi}b(x,\xi)u(\tau_x^{-1})(y)dyd\xi+Eu(x),
\end{equation} 
for some $b\in S^m$, where $E$ is a smoothing operator.
\end{proposition}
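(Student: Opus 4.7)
The strategy is to recast the $\tau$-quantization as a Kohn--Nirenberg pseudo-differential operator acting on the $x$-parametrised family $y\mapsto u(\tau_x^{-1}(y))$ by performing two successive changes of variable (first in $y$, then in $\xi$), and then to apply the amplitude-to-symbol reduction supplied by Theorem~\ref{teorema da amplitude a simboli} in the trivial case $\tau\equiv 0$.

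First, I would change variables $z=\tau_x(y)$ in the $y$-integral. By Corollary~\ref{COR:hadamard-tau}, the map $\tau_x$ is a global $C^\infty$-diffeomorphism of $\R^n$ with $\tau_x^{-1}(z)=x+\tau^{-1}(z-x)$ and Jacobian $|\det(\tau^{-1})'(z-x)|$, and $\tau^{-1}$ is itself admissible with bounded derivatives and uniformly non-degenerate Jacobian. After relabelling $z$ as $y$, \eqref{EQ:tauop1} takes the form
\begin{equation*}
A_{\sigma,\tau}u(x)=\int_{\R^n}\int_{\R^n}e^{-\ii\tau^{-1}(y-x)\cdot\xi}\,\sigma(y,\xi)\,|\det(\tau^{-1})'(y-x)|\,u(\tau_x^{-1}(y))\,dy\,d\xi,
\end{equation*}
which already contains the desired factor $u(\tau_x^{-1}(y))$ but exhibits the non-standard phase $-\tau^{-1}(y-x)\cdot\xi$ in place of $(x-y)\cdot\xi$.

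Second, to linearise the phase I would apply the Hadamard lemma to $\tau^{-1}$, writing $\tau^{-1}(w)=M(w)w$ with $M(w):=\int_0^1(\tau^{-1})'(sw)\,ds$; then $\tau^{-1}(y-x)\cdot\xi=(y-x)\cdot M(y-x)^T\xi$, so the substitution $\eta:=M(y-x)^T\xi$ converts the phase into $(x-y)\cdot\eta$ and produces
\begin{equation*}
A_{\sigma,\tau}u(x)=\int_{\R^n}\int_{\R^n}e^{\ii(x-y)\cdot\eta}\,\tilde\sigma(x,y,\eta)\,u(\tau_x^{-1}(y))\,dy\,d\eta,
\end{equation*}
where $\tilde\sigma(x,y,\eta):=\sigma(y,(M(y-x)^T)^{-1}\eta)\,|\det(\tau^{-1})'(y-x)|/|\det M(y-x)|$. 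By the chain and Leibniz rules together with the admissibility of $\tau^{-1}$ and the uniform non-degeneracy of its derivative, $\tilde\sigma$ belongs to the amplitude class $A^m=A^m_0$. I would then apply Theorem~\ref{teorema da amplitude a simboli} with $\tau\equiv 0$ and $\mu=0$ to obtain a Kohn--Nirenberg symbol $b\in S^m$, with leading term $b(x,\eta)=\sigma(x,\tau'(0)^T\eta)$ and asymptotic expansion
\begin{equation*}
b(x,\eta)\sim\sum_{\alpha}\frac{(-\ii)^{|\alpha|}}{\alpha!}\,\partial_\eta^\alpha\partial_y^\alpha\tilde\sigma(x,y,\eta)\Big|_{y=x},
\end{equation*}
such that the difference between the amplitude operator above and the Kohn--Nirenberg operator with symbol $b$ is a smoothing operator, which will become $E$. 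Since the reduction manipulates only the amplitude and the phase (via Taylor expansion of $\tilde\sigma$ in $y$ around $y=x$ and integration by parts in $\eta$), the $x$-parametrised factor $u\circ\tau_x^{-1}$ is carried through unchanged.

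The principal obstacle is justifying the global change of variable $\eta=M(y-x)^T\xi$, since even with $\inf_x|\det\tau'(x)|\neq 0$ the averaged Jacobian $M(w)=\int_0^1(\tau^{-1})'(sw)\,ds$ need not be invertible for every $w\in\R^n$. I would handle this via a partition of unity in $w=y-x$: on a neighbourhood of the origin, $M(w)$ is close to the invertible matrix $M(0)=(\tau^{-1})'(0)$ and hence uniformly invertible, so the change of variable is legitimate; on the complement, $|\tau^{-1}(y-x)|$ is bounded below and tends to infinity by Corollary~\ref{COR:hadamard-tau}, so repeated integration by parts in $\xi$ (using $\tau^{-1}(y-x)$ as the integrating factor) shows that this piece has a smooth, rapidly decaying kernel and is therefore a smoothing operator absorbed into $E$.
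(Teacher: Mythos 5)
Your proposal is correct and follows essentially the same route as the paper's proof: the substitution $w=\tau_x(y)$, a cutoff splitting into a far-from-diagonal piece handled by integration by parts in $\xi$ (smoothing) and a near-diagonal piece where the phase is linearised via the averaged-Jacobian (Hadamard) factorisation $\tau_x^{-1}(x)-\tau_x^{-1}(w)=L_{xw}(x-w)$ followed by the linear change of variable in $\xi$ and the standard amplitude-to-Kohn--Nirenberg reduction. The only difference is presentational: you state the $\eta$-substitution globally first and then localise to make $M(w)$ invertible, whereas the paper localises before substituting, and your final partition-of-unity paragraph brings the two arguments into agreement.
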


\proof
Setting $w:=x+\tau(y-x)$ we have
\begin{eqnarray*}
w-x&=&\tau(y-x),\\
-\tau^{-1}(w-x)&=&x-y.\label{beta}
\end{eqnarray*}
Using the function $\tau_x(y)=x+\tau(y-x)$ from \eqref{taustar}, we also have
$$
\tau_x^{-1}(w)=y.
$$
Since $\tau_x(x)=x+\tau(0)=x$, we also have $\tau_x^{-1}(x)=x$ in view of 
Theorem \ref{THM:Hadamard}.
Inserting all of these in \eqref{EQ:tauop1}
we get
\begin{equation*}
A_{\sigma,\tau}u(x)=\int_{\R^n}\int_{\R^n}
e^{\ii(\tau_x^{-1}(x)-\tau_x^{-1}(w))\cdot\xi}\sigma(w,\xi) u(\tau_x^{-1}(w))\left|\det\left(\frac{\partial\tau_x^{-1}(w)}{\partial w}\right)\right|dwd\xi.
\end{equation*}
Now we can observe that the main contribution here is from the $w$ close to $x$. Indeed, let $\chi(x,w)$ be a cut off function supported in a small neighborhood of $w=x$ and such that $\chi(x,x)=1.$ Then we can write 
\begin{multline*}
A_{\sigma,\tau}u(x) \\
=\underbrace{\int_{\R^n}\int_{\R^n}
e^{\ii(\tau_x^{-1}(x)-\tau_x^{-1}(w))\cdot\xi}\sigma(w,\xi) 
(1-\chi(x,w))u(\tau_x^{-1}(w)) \left|\det\left(\frac{\partial\tau_x^{-1}(w)}{\partial w}\right)\right|
dwd\xi}_{I_1}
\\
+\underbrace{\int_{\R^n}\int_{\R^n}e^{\ii(\tau_x^{-1}(x)-\tau_x^{-1}(w))\cdot\xi}\sigma(w,\xi) \chi(x,w)
u(\tau_x^{-1}(w))
\left|\det\left(\frac{\partial\tau_x^{-1}(w)}{\partial w}\right)\right|
dwd\xi}_{I_2}.
\end{multline*} 
For $w$ away from $x$ we can integrate by parts in $\xi$ using the property
$$
\partial_\xi^{\alpha}e^{\ii(\tau_x^{-1}(x)-\tau_x^{-1}(w))\cdot\xi}=\ii^{|\alpha|}
(\tau_x^{-1}(x)-\tau_x^{-1}(w))^{\alpha}
e^{\ii(\tau_x^{-1}(x)-\tau_x^{-1}(w))\cdot\xi},
$$
so that we have
\begin{multline*}
I_1=
(-\ii)^{|\alpha|}\int_{\R^n}\int_{\R^n}e^{\ii(\tau_x^{-1}(x)-\tau_x^{-1}(w))\cdot\xi}(1-\chi(x,w))\left[(\tau_x^{-1}(x)-\tau_x^{-1}(w))^{\alpha}\right]^{-1} \times \\
\times \partial_\xi^{\alpha}\sigma(w,\xi)
u(\tau_x^{-1}(w))
\left|\det\left(\frac{\partial\tau_x^{-1}(w)}{\partial w}\right)\right| dwd\xi.
\end{multline*} 
Since $\inf_{x\in\Rn} |\det \frac{\partial \tau(x)}{\partial x}|\not=0$, for any $w$ in the support of the integrand of $I_1$, we can find $\alpha$ with $|\alpha|$ as large as we want such that $|(\tau_x^{-1}(x)-\tau_x^{-1}(w))^{\alpha}|$ is uniformly bounded away from $0$. 
By Corollary \ref{COR:hadamard-tau} and since $\sigma\in S^m$, we have that 
$\left[(\tau_x^{-1}(x)-\tau_x^{-1}(w))^{\alpha}\right]^{-1} \partial_\xi^{\alpha}\sigma(w,\xi)$ is in $S^{{m}-|\alpha|}$, so that the term $I_1$ is smoothing. 

For $I_2$, since $w$ and $x$ are close to each other, we can write 
$$\tau_x^{-1}(x)-\tau_x^{-1}(w)=L_{xw}(x-w),$$
where $L_{xw}$ is a linear mapping which depends smoothly on $x,w$.
Now we can write
\begin{multline*}
I_2=\int_{\R^{n}}\int_{\R^{n}}e^{\ii L_{xw}(x-w)\cdot\xi}\chi(x,w)\sigma(w,\xi)u(\tau_x^{-1}(w))
\left|\det\left(\frac{\partial\tau_x^{-1}(w)}{\partial w}\right)\right| dwd\xi
\\ =\int_{\R^{n}}\int_{\R^{n}}e^{\ii (x-w)\cdot\xi}\chi(x,w)\sigma(w,L'_{xw}\xi)
u(\tau_x^{-1}(w))
\left|\det\left(\frac{\partial\tau_x^{-1}(w)}{\partial w}\right)\right|
 \left|\det L_{xw}^{-1}\right| dwd\xi,
\end{multline*} 
where $L_{xw}'={\left[{L_{xw}}^t\right]}^{-1}$.
From now on, by modifying a well-known reduction for amplitude operators, see e.g. 
\cite[Theorem 2.5.8]{Ruz}, we see that the pseudo-differential operator with the amplitude 
$$a(x,w,\xi)=\chi(x,w)\sigma(w,L'_{xw}\xi)
\left|\det\left(\frac{\partial\tau_x^{-1}(w)}{\partial w}\right)\right|
 \left|\det L_{xw}^{-1}\right| $$ 
can be reduced to a Kohn-Nirenberg operator with symbol $b(x,\xi)$ with the asymptotic expansion of the form
$$b(x,\xi)\sim \sum_{\alpha\geq 0}\frac{\ii^{-|\alpha|}}{\alpha!}\partial_{\xi}^{\alpha}\partial_{w}^{\alpha}a(x,w,\xi)|_{w=x},$$
completing the proof.
\endproof

\section{Calder\'on-Vaillancourt theorem}
\label{SEC:5}

In this section we discuss the $L^2$-boundedness of the appearing operators. It is convenient to look at this problem from the point of view of more general amplitude operators.
First we note that Lemma \ref{EQUIVALENZA} and the $L^2$-boundedness of the pseudo-differential operators with symbols in the H\"ormander class $S^0$ immediately imply the following property:

\begin{corollary}
Let $a\in A^{0}_d$ for some $d\in\R$. Then the associated amplitude operator
$$Au(x)=\int_{\mathbb{R}^{n}}\int_{\mathbb{R}^{n}}e^{\ii(x-y)\xi}a(x,y,\xi)u(y)dyd\xi$$
extends to a bounded operator from $L^2(\Rn)$ to $L^2(\Rn)$. 

Consequently, if $\tau$ is an admissible quantizing function (of any order $\mu\geq 0$) and $\sigma\in S^m$, then the operator $A_{\sigma,\tau}$ defined by
$$
A_{\sigma,{\tau}} u(x) =\int_{\R^n} e^{\ii x \cdot \xi} \sigma(x+\tau(y-x), \xi) \widehat u(\xi) \,d \xi
$$
extends to a bounded operator from $L^2(\Rn)$ to $L^2(\Rn)$. 
\end{corollary}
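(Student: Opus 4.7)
The overall plan is to reduce both statements to the classical Calder\'on--Vaillancourt theorem for Kohn--Nirenberg symbols in the H\"ormander class $S^0$, by exploiting Lemma \ref{EQUIVALENZA} and Theorem \ref{teorema da amplitude a simboli} to strip away the $\langle x-y\rangle^{d(|\alpha|+|\beta|)}$ growth in the amplitude.

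For the first statement, given $a\in A^{0}_{d}$, I would first apply Lemma \ref{EQUIVALENZA} with $k$ chosen sufficiently large to replace $a$ by $a_{k}\in A^{0,k}$ such that $A_{a}=A_{a_{k}}$. The point is that for $a_{k}$ the $x$- and $y$-derivatives of order up to $k$ are now uniformly bounded, with only the standard $\langle\xi\rangle^{-|\gamma|}$ behaviour in $\xi$. I would then apply Theorem \ref{teorema da amplitude a simboli} with the trivial quantizing function $\tau\equiv 0$ (order $\mu=0$), which produces a Kohn--Nirenberg representation
\[
A_{a_{k}}=\Op(\sigma_{1})+R_{M},
\]
where $\sigma_{1}\in S^{0}$ (each term in the weak expansion belongs to $S^{-|\delta|}$ with $\mu=0$) and the remainder has amplitude in $A^{-M}_{dM}$ for any chosen $M$. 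The main term $\Op(\sigma_{1})$ is $L^{2}$-bounded by the classical Calder\'on--Vaillancourt theorem for $S^{0}$.

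The remainder $R_{M}$ is then handled iteratively: since its amplitude belongs to $A^{-M}_{dM}$ for $M$ as large as desired, one re-applies Lemma \ref{EQUIVALENZA} to absorb the $\langle x-y\rangle^{dM}$ growth into additional $\langle\xi\rangle^{-N}$-decay, until the resulting amplitude is in $A^{-N,k}$ with $N$ so large that $L^{2}$-boundedness follows by elementary means (Schur's lemma on the kernel after integrating $\xi$ out, or directly from Hilbert--Schmidt estimates once $N>n$). This is the only place where some care is needed, and it is exactly the argument behind the phrase ``weakly belongs to $A^{m}$'' already stressed after Lemma \ref{EQUIVALENZA}.

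For the second statement I would argue as follows. If $\sigma\in S^{0}$ and $\tau$ is admissible of order $\mu\geq 0$, then by the chain rule and Definition \ref{TAO AMMISSIBILE}, the function $a(x,y,\xi):=\sigma(x+\tau(y-x),\xi)$ satisfies the bound \eqref{EQ:s-class} with $m=0$ and $d=\mu$, hence $a\in A^{0}_{\mu}$ (this is exactly the observation $\sigma\in S^{0}_{\mu,\tau}$ made after Definition \ref{TAO AMMISSIBILE}, combined with the fact that $S^{m}_{d,\tau}\subset A^{m}_{d}$ by \eqref{EQ:s-class} vs.\ \eqref{EQ:s-amp}). Since $A_{\sigma,\tau}=A_{a}$, the first part of the corollary yields the $L^{2}$-boundedness immediately, uniformly in the order $\mu$ of $\tau$. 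The main obstacle throughout is essentially bookkeeping: making sure that at each reduction step the gain in $\xi$-decay strictly dominates the loss in $\langle x-y\rangle$-growth, which is guaranteed precisely by the freedom in choosing $k$ in Lemma \ref{EQUIVALENZA} and $M$ in Theorem \ref{teorema da amplitude a simboli}.
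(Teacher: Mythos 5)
Your proof is correct in substance and relies on the same basic mechanism as the paper: Lemma \ref{EQUIVALENZA} removes the $\langle x-y\rangle^{d(|\alpha|+|\beta|)}$ growth, after which one is back in the classical bounded-derivative situation, and the second half of the corollary is obtained exactly as in the paper, by noting that $\sigma\in S^{0}$ composed with an admissible $\tau$ of order $\mu$ yields an amplitude in $A^{0}_{\mu}$. The difference is that the paper stops one step earlier: since $L^{2}$-boundedness depends only on finitely many derivatives of the amplitude, applying Lemma \ref{EQUIVALENZA} with $k\geq 2n+1$ already places $a_{k}$ in $A^{0,k}$, i.e.\ within the hypotheses of the amplitude form of the Calder\'on--Vaillancourt theorem (in this paper, Theorem \ref{THM:CV}, or the classical $S^{0}$ results the text invokes), so no further decomposition is needed --- this is precisely what the remark after Lemma \ref{EQUIVALENZA} alludes to. Your detour through Theorem \ref{teorema da amplitude a simboli} with $\tau\equiv 0$ (a Kohn--Nirenberg symbol in $S^{0}$ plus a remainder with amplitude in $A^{-M}_{dM}$, then Calder\'on--Vaillancourt for the main term and a kernel estimate for the remainder) also works, but it buys nothing and costs you the remainder analysis, where two details deserve care: first, Theorem \ref{teorema da amplitude a simboli} is stated for amplitudes in $A^{m}_{d}$ with all derivatives controlled, so you should apply it to $a$ itself rather than to $a_{k}$, or else check from the explicit formula in the proof of Lemma \ref{EQUIVALENZA} that $a_{k}$ still lies in some $A^{0}_{d'}$; second, the Hilbert--Schmidt alternative you offer for the remainder fails, because after integration by parts in $\xi$ its kernel decays in $x-y$ but has no decay along the diagonal direction, hence is never square-integrable on $\R^{2n}$ --- Schur's lemma, your first option, is the right tool and does close the argument.
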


However, an interesting and important question is how many derivatives of the symbol/amplitude should be bounded to ensure the $L^2$-boundedness of pseudo-differential operators in the spirit of the celebrated Calder\'on-Vaillancourt theorem \cite{CV}.
Similar results for classes of Fourier integral operators have been obtained in 
 \cite{RuzSug1}. We now prove a variant of the Calder\'on-Vaillancourt theorem for amplitude operators.

\begin{theorem}\label{THM:CV}
Let $a=a(x,y,\xi):\Rn\times\Rn\times\Rn\to\mathbb C$ be such that  
\begin{equation}\label{EQ:CV-assump}
\sup_{x,y,\xi\in\Rn} |\partial_x^{\alpha}\partial_{y}^{\beta}\partial_\xi^{\gamma}a(x,y,\xi)|<\infty
\end{equation} 
holds for all multi-indices $\alpha,\beta,\gamma$ such that $|\alpha|,|\beta|,|\gamma|\le 2n+1$. Then the operator 
$$Au(x)=\int_{\mathbb{R}^{n}}\int_{\mathbb{R}^{n}}e^{\ii(x-y)\xi}a(x,y,\xi)u(y)dyd\xi$$
extends to a bounded operator from $L^2(\Rn)$ to $L^2(\Rn)$. Moreover, we have
\begin{equation}\label{EQ:Cvampest}
\|A\|_{L^2\rightarrow L^2}\le C\sup_{|\alpha|,|\beta|,|\gamma|\leq 2n+1}\sup_{x,y,\xi\in\Rn} |\partial_x^{\alpha}\partial_{y}^{\beta}\partial_\xi^{\gamma}a(x,y,\xi)|.
\end{equation} 
\end{theorem}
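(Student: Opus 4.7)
The natural approach is the classical Cotlar--Stein almost orthogonality argument, in the spirit of the original proof of Calder\'on--Vaillancourt, adapted to the three-variable amplitude setting. After first reducing to the case where $a$ is compactly supported in $\xi$ by a smooth cutoff $\chi(\xi/R)$ (and showing that the resulting bound is uniform in $R$), I would pick a nonnegative $\phi\in C_c^\infty(\Rn)$ with $\sum_{j\in\Z^n}\phi(x-j)^2=1$, set $\chi_j(x):=\phi(x-j)$, and decompose $a=\sum_{j,k,\ell\in\Z^n} a_{j,k,\ell}$ with $a_{j,k,\ell}(x,y,\xi):=\chi_j(x)\chi_k(y)\chi_\ell(\xi)\,a(x,y,\xi)$. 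Writing $A_I$ with $I=(j,k,\ell)\in\Z^{3n}$ for the amplitude operator associated to $a_{j,k,\ell}$, so that $A=\sum_I A_I$, the plan is to verify the Cotlar--Stein conditions
\[
\sup_{I}\sum_{J\in\Z^{3n}}\bigl(\|A_IA_J^*\|_{L^2\to L^2}^{1/2}+\|A_I^*A_J\|_{L^2\to L^2}^{1/2}\bigr)<\infty,
\]
which immediately implies the required $L^2$-boundedness of $A$.

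To obtain the almost orthogonality bounds, I would write the kernel of $A_IA_J^*$ as the three-fold oscillatory integral
\[
K_{A_IA_J^*}(x,y)=\int\!\!\int\!\!\int e^{\ii[(x-z)\cdot\xi-(y-z)\cdot\eta]} a_I(x,z,\xi)\,\overline{a_J(y,z,\eta)}\,dz\,d\xi\,d\eta,
\]
and then integrate by parts: in $\xi$, which gains a factor $\langle x-z\rangle^{-1}$ per $\xi$-derivative of the amplitude; in $\eta$, gaining $\langle y-z\rangle^{-1}$; and in $z$, which by the product rule costs one $y$-derivative of each of $a_I,a_J$ and gains $\langle \xi-\eta\rangle^{-1}$. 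The supports of $\chi_k(z)\chi_{k'}(z)$ and $\chi_\ell(\xi)\chi_{\ell'}(\eta)$ force $|k-k'|=O(1)$ and $|\xi-\eta|\gtrsim\langle\ell-\ell'\rangle$, while $\chi_j(x)$ and $\chi_{j'}(y)$ imply $|x-z|\sim\langle j-k\rangle$ and $|y-z|\sim\langle j'-k\rangle$ on the support. Using at most $2n+1$ derivatives in each of $x,y,\xi$ supplied by the hypothesis, Schur's test then yields
\[
\|A_IA_J^*\|_{L^2\to L^2}\lesssim \langle j-k\rangle^{-(2n+1)}\langle j'-k\rangle^{-(2n+1)}\langle\ell-\ell'\rangle^{-(2n+1)}
\]
when $|k-k'|=O(1)$ (and is zero otherwise), with constant controlled by the right-hand side of \eqref{EQ:Cvampest}. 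The summability in the Cotlar--Stein sum then follows because $\sum_{m\in\Z^n}\langle m\rangle^{-(2n+1)/2}<\infty$ thanks to $(2n+1)/2>n$, which is exactly why the threshold $2n+1$ appears in the hypothesis. The analysis of $A_I^*A_J$ is entirely symmetric.

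The main obstacle is the bookkeeping in the integration-by-parts step: one must distribute at most $2n+1$ derivatives in each of the three variables in a balanced way to produce decay simultaneously in all three difference blocks $\langle j-k\rangle$, $\langle j'-k\rangle$, $\langle \ell-\ell'\rangle$, while keeping careful track of exactly which constants from \eqref{EQ:CV-assump} enter the final bound. A minor technical point is the need to treat the regime $|\ell-\ell'|=O(1)$ separately, where integration by parts in $z$ is not productive; there, however, decay in $\ell-\ell'$ is automatic from the support condition, and one only needs to integrate by parts in $\xi,\eta$ to extract the required decay in the spatial index blocks.
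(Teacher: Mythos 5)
Your proposal is correct and follows essentially the same route as the paper: a unit-scale partition of unity in all three variables $x,y,\xi$, kernel estimates for the products $A_IA_J^*$ and $A_I^*A_J$ obtained by integration by parts in the common spatial variable (gaining $\langle\ell-\ell'\rangle^{-(2n+1)}$) and in the two frequency variables (gaining decay in the spatial index separations), then Schur's test and Cotlar--Stein, with the threshold $2n+1$ entering exactly through the convergence of $\sum_{m\in\Z^n}\langle m\rangle^{-(2n+1)/2}$. The only differences are cosmetic: the paper re-expresses the decay in terms of $\langle i-i'\rangle,\langle j-j'\rangle,\langle k-k'\rangle$ before summing, whereas you sum directly with decay in $\langle j-k\rangle,\langle j'-k\rangle,\langle\ell-\ell'\rangle$ together with the constraint $|k-k'|=O(1)$, which works equally well.
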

\proof
We follow the strategy of the proof of \cite[Theorem 2.1]{RuzSug1}.
Let $\chi\in C_0^\infty(\Rn)$ be a real-valued non-negative functions such that the family
of shifts, $\chi_k(x)=\chi(x-k)$, $k\in\mathbb Z^n$, forms a partition of unity. Then we decompose
$$
A=\sum_{i,j,k\in\mathbb Z^n} A_{i,j,k},
$$
with the operators
$$A_{i,j,k}u(x)=\int_{\mathbb{R}^{n}}\int_{\mathbb{R}^{n}} e^{\ii(x-y)\cdot\xi}a(x,y,\xi)\chi_i{(x)}\chi_j{(y)}\chi_{k}(\xi)u(y) dyd\xi.$$
By the hypothesis on $A$ by Schur's lemma we have the uniform bound
$$\|A_{i,j,k}\|_{L^2\rightarrow L^2}\le C$$ 
for all $i,j,k$. 
The adjoint operators become
$$
A^*_{i',j',k'}u(y)=\int_{\mathbb{R}^{n}}\int_{\mathbb{R}^{n}}
e^{\ii(y-z)\cdot\eta}\overline{a(z,y,\eta)}\chi_{j'}(z)\chi_{i'}(y)\chi_{k'}(\eta)u(z) dzd\eta.
$$
Now composing  $A^*_{i',j',k'}\circ A_{i,j,k}$ we get
\begin{eqnarray*}
&& A_{i',j',k'}^*(A_{i,j,k}u)(x) \\
&=&\int_{\mathbb{R}^{2n}}\chi_{i'}{(y)}\chi_{j'}{(x)}\chi_{k'}(\xi)e^{\ii(x-y)\cdot\xi}\overline{a(y,x,\xi)}A_{i,j,k}u(y) dyd\xi\\
&=&\int_{\mathbb{R}^{4n}}\chi_{i'}{(y)}\chi_{j'}{(x)}\chi_{k'}(\xi)e^{\ii(x-y)\cdot\xi}\overline{a(y,x,\xi)}
\\& &\chi_{i}{(y)}\chi_{j}{(z)}\chi_{k}(\eta)e^{\ii(y-z)\cdot\eta}a(y,z,\eta)u(z)dzd\eta dyd\xi\\
&=&\int_{\mathbb{R}^{n}}u(z)dz\int_{\mathbb{R}^{3n}}\chi_{i'}{(y)}\chi_{j'}{(x)}\chi_{k'}(\xi)\chi_{i}{(y)}\chi_{j}{(z)}\chi_{k}(\eta)e^{\ii(x-y)\cdot\xi}e^{\ii(y-z)\cdot\eta}\\& &\overline{a(y,x,\xi)}a(y,z,\eta)dyd\xi d\eta
\\
&=&\int_{\mathbb{R}^{n}}K_{i,j,k,i',j',k'}(x,z)u(z)dz,
\end{eqnarray*}
where we denote
\begin{multline*}
K_{i,j,k,i',j',k'}(x,z) = \\
\int_{\mathbb{R}^{3n}}\chi_{i'}{(y)}\chi_{j'}{(x)}\chi_{k'}(\xi)\chi_{i}{(y)}\chi_{j}{(z)}\chi_{k}(\eta)
e^{\ii(x-y)\cdot\xi}e^{\ii(y-z)\cdot\eta}\overline{a(y,x,\xi)}a(y,z,\eta)dyd\xi d\eta.
\end{multline*} 
First we note that there is a constant $N_1$ such that for $|i-i'|> N_1$ we have
 $K_{i,j,k,i',j',k'}(x,z) =0$, so that we can restrict to $|i-i'|\le N_1.$
 
We can integrate inside $K$ withe the operator $L_y$ with the transpose
$$
{}^t L_y=\frac{1}{\ii} \frac{\eta-\xi}{|\eta-\xi|^2}\cdot\nabla_y,
$$
so that for $|k-k'|>N_2$, for some $N_2$, we have
\begin{eqnarray}\label{KERNELAMPLITUDINE}
&& K_{i,j,k,i',j',k'}(x,z) = \nonumber \\ 
&=&\int_{\mathbb{R}^{3n}}
e^{\ii(x-y)\cdot\xi}e^{\ii(y-z)\cdot\eta}
\chi_{i'}{(y)}\chi_{j'}{(x)}\chi_{k'}(\xi)\chi_{i}{(y)}\chi_{j}{(z)}\chi_{k}(\eta)
\overline{a(y,x,\xi)}a(y,z,\eta)dy d\xi d\eta \nonumber\\
&=&\int_{\mathbb{R}^{3n}}
e^{\ii(x-y)\cdot\xi}e^{\ii(y-z)\cdot\eta} \\
&& \quad
L_y^{2n+1}\left(\chi_{i'}{(y)}\chi_{j'}{(x)}\chi_{k'}(\xi)\chi_{i}{(y)}\chi_{j}{(z)}\chi_{k}(\eta)
\overline{a(y,x,\xi)}a(y,z,\eta)\right) dy d\xi d\eta \nonumber
\end{eqnarray}
From the assumptions, altogether, we obtain the estimate
\begin{equation}\label{EQ:estkk}
|K_{i,j,k,i',j',k'}(x,z)|\leq C\frac{1}{\langle k-k'\rangle^{2n+1}} 1_{|i-i'|\le N_1}.
\end{equation} 
Furthermore, let us define operators $L_\xi$ and $L_\eta$ by their transposes
$$
{}^t L_\xi=\frac{1}{\ii} \frac{x-y}{|x-y|^2}\cdot\nabla_\xi, \quad
{}^t L_\eta=\frac{1}{\ii} \frac{y-z}{|y-z|^2}\cdot\nabla_\eta.
$$
Then we can integrate by parts with these operators similar to and continuing \eqref{KERNELAMPLITUDINE}, so that
\begin{multline*}
K_{i,j,k,i',j',k'}(x,z) = 
\int_{\mathbb{R}^{3n}}
e^{\ii(x-y)\cdot\xi}e^{\ii(y-z)\cdot\eta} \\
L_\xi^{2n+1} L_\eta^{2n+1} L_y^{2n+1}\left(\chi_{i'}{(y)}\chi_{j'}{(x)}\chi_{k'}(\xi)\chi_{i}{(y)}\chi_{j}{(z)}\chi_{k}(\eta)
\overline{a(y,x,\xi)}a(y,z,\eta)\right) dy d\xi d\eta.
\end{multline*}
Combining this with \eqref{EQ:estkk}, we can estimate
\begin{equation}\label{EQ:estkkjs}
|K_{i,j,k,i',j',k'}(x,z)|\leq C\frac{1}{\langle k-k'\rangle^{2n+1}}
\frac{1}{\langle j-i'\rangle^{2n+1}}\frac{1}{\langle i-j'\rangle^{2n+1}} 1_{|i-i'|\le N_1}.
\end{equation} 
We can now estimate
$$
|i-j'|+|j-i'|\geq |i-i'+j-j'|\geq |j-j'|-|i-i'|\geq  |j-j'|-N_1.
$$
Consequently, we have
$$
 |j-j'|\leq |i-j'|+|j-i'|+N_1,
$$
and hence also
$$
\langle j-j'\rangle\leq C\langle i-j'\rangle \langle j-i'\rangle.
$$
Therefore, the estimate \eqref{EQ:estkkjs} implies
\begin{equation}\label{EQ:estkkjs2}
|K_{i,j,k,i',j',k'}(x,z)|\leq C\frac{1}{\langle k-k'\rangle^{2n+1}}
\frac{1}{\langle j-j'\rangle^{2n+1}} \frac{1}{\langle i-i'\rangle^{2n+1}}.
\end{equation} 
Since the support of $K_{i,j,k,i',j',k'}(x,z)$ is uniformly bounded in $x$ and $z$, and observing that the constants are quadratic in derivatives of the amplitude, we obtain
\begin{eqnarray*}
\sup_x \int_{\Rn} |K_{i,j,k,i',j',k'}(x,z)| dz & \leq & CM^2 h(i-i',j-j',k-k')^2, \\
\sup_z \int_{\Rn} |K_{i,j,k,i',j',k'}(x,z)| dx & \leq & CM^2 h(i-i',j-j',k-k')^2,
\end{eqnarray*}
where 
$$
M=\sup_{|\alpha|,|\beta|,|\gamma|\leq 2n+1}  \| \partial_x^{\alpha}\partial_{y}^{\beta}\partial_\xi^{\gamma} a\|_{L^\infty(\mathbb R^n_x\times \mathbb R^n_y \times \mathbb R^n_\xi)},
$$
and 
$$
h(i-i',j-j',k-k')=\left(\frac{1}{\langle k-k'\rangle^{2n+1}}
\frac{1}{\langle j-j'\rangle^{2n+1}} \frac{1}{\langle i-i'\rangle^{2n+1}}\right)^{\frac12}.
$$
Therefore, by Schur's lemma (see e.g. \cite[p. 284]{Stein} or \cite[Lemma 2.1]{RuzSug1}), we obtain 
$$
\|A_{i,j,k}^*A_{i',j',k'}\|_{L^2\rightarrow L^2}\le CM^2 h(i-i',j-j',k-k')^2.
$$
By a similar argument it can be also shown that 
$$
 \|A_{i,j,k}A_{i',j',k'}^*\|_{L^2\rightarrow L^2}\le CM^2 h(i-i',j-j',k-k')^2.
$$
Consequently, Cotlar's lemma (see e.g. \cite[Chapter VII, Section 2]{Stein} or \cite[Lemma 2.2]{RuzSug1}) implies that 
$$
\|A\|_{L^2\rightarrow L^2}\le CM,
$$
yielding the boundedness and \eqref{EQ:Cvampest}.
\endproof

As a consequence of Theorem \ref{THM:CV} we also get a version of the Calder\'on-Vaillancourt theorem for $\tau$-quantized operators.

\begin{corollary}
Let $\tau:\Rn\to\Rn$ be such that $\tau\in C^{4n+2}$, $\tau(0)=0$, and 
\begin{equation}\label{EQ:taubound}
\sup_{x\in\Rn} |\partial^{\alpha}\tau(x)|<\infty \; \textrm{ for all } \;
0<|\alpha|\leq 4n+2.
\end{equation} 
Let $\sigma:\Rn\times\Rn\to \mathbb C$ be such that 
\begin{equation}\label{bound}
\sup_{x,\xi\in\Rn} |\partial_{x}^{\beta}\partial_{\xi}^{\gamma}\sigma(x,\xi)|<\infty \; \textrm{ for all } \;  |\beta|, |\gamma|\le 2n+1.
\end{equation}
Then the operator
$$
A_{\sigma,{\tau}} u(x) = \Opt(\sigma) u(x) := \int_{\R^n} \int_{\R^n} e^{\ii (x - y) \cdot \xi} \sigma(x+\tau(y-x), \xi) u(y) \,d y \, d \xi
$$
is bounded
from $L^2(\Rn)$ to $L^2(\Rn)$
\end{corollary}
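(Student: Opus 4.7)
The plan is to reduce this corollary directly to Theorem \ref{THM:CV} by viewing $A_{\sigma,\tau}$ as the amplitude operator associated to the amplitude
\[
a(x,y,\xi) := \sigma(x+\tau(y-x),\xi).
\]
With this identification the quantization formula for $A_{\sigma,\tau}$ coincides with the amplitude operator integral in Theorem \ref{THM:CV}, so it is enough to check the hypothesis \eqref{EQ:CV-assump}, namely that $\partial_x^\alpha\partial_y^\beta\partial_\xi^\gamma a$ is uniformly bounded on $\Rn\times\Rn\times\Rn$ for all $|\alpha|,|\beta|,|\gamma|\le 2n+1$, and then apply \eqref{EQ:Cvampest}.

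The $\xi$-derivatives are the easy case: since $\tau$ does not depend on $\xi$, one has $\partial_\xi^\gamma a(x,y,\xi) = (\partial_\xi^\gamma\sigma)(x+\tau(y-x),\xi)$, which is uniformly bounded for $|\gamma|\le 2n+1$ by \eqref{bound}. The substantive step is to control $\partial_x^\alpha\partial_y^\beta$ applied to the composition $\sigma(\phi(x,y),\xi)$ with $\phi(x,y):=x+\tau(y-x)$. I would invoke Faà di Bruno's chain rule after recording that $\partial_{x_i}\phi_k = \delta_{ik} - (\partial_i\tau_k)(y-x)$ and $\partial_{y_j}\phi_k = (\partial_j\tau_k)(y-x)$, and that all higher mixed $(x,y)$-derivatives of $\phi$ are, up to signs, corresponding higher derivatives of $\tau$ evaluated at $y-x$. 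Consequently, $\partial_x^\alpha\partial_y^\beta\partial_\xi^\gamma a(x,y,\xi)$ becomes a finite sum of terms, each of the form
\[
C_{\alpha,\beta,\gamma}\,\bigl(\partial_w^{\alpha'}\partial_\xi^\gamma\sigma\bigr)(\phi(x,y),\xi)\,\prod_{j} \bigl(\partial^{\nu_j}\tau\bigr)(y-x),
\]
where $|\alpha'|\le |\alpha|+|\beta|\le 4n+2$ and each $|\nu_j|\le |\alpha|+|\beta|\le 4n+2$.

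By hypothesis \eqref{EQ:taubound}, every derivative of $\tau$ of order at most $4n+2$ is uniformly bounded on $\Rn$, and by \eqref{bound} the derivatives of $\sigma$ occurring as $\partial_w^{\alpha'}\partial_\xi^\gamma\sigma$ are likewise uniformly bounded in the required ranges. Hence each term in the Faà di Bruno expansion is bounded uniformly in $(x,y,\xi)$, so $a$ meets the hypothesis of Theorem \ref{THM:CV} and that theorem delivers the $L^2\to L^2$ boundedness of $A_{\sigma,\tau}$. The only non-trivial point is bookkeeping: verifying that the bounds $|\alpha|,|\beta|\le 2n+1$ in \eqref{EQ:CV-assump} force derivatives of $\tau$ and of $\sigma$ in its first slot of order at most $4n+2$, which is exactly what is assumed in \eqref{EQ:taubound} and \eqref{bound}. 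No further analytic work is required beyond Theorem \ref{THM:CV} itself.
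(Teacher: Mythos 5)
Your proposal follows exactly the route of the paper's own proof, which is precisely this one-liner: regard $a(x,y,\xi):=\sigma(x+\tau(y-x),\xi)$ as an amplitude, apply the chain rule, and invoke Theorem \ref{THM:CV} because \eqref{EQ:taubound} and \eqref{bound} are claimed to imply \eqref{EQ:CV-assump}. So in structure and in every essential step you have reproduced the intended argument, including the observation that the $\xi$-derivatives pass through the composition untouched and that the spatial derivatives are handled by Fa\`a di Bruno with factors $(\partial^{\nu}\tau)(y-x)$.

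One bookkeeping claim in your last paragraph is, however, not accurate as stated: you assert that the Fa\`a di Bruno expansion forces derivatives of $\sigma$ in its first slot of order at most $4n+2$ and that this ``is exactly what is assumed in \eqref{bound}.'' Condition \eqref{bound} only bounds $\partial_x^{\beta}\partial_\xi^{\gamma}\sigma$ for $|\beta|\le 2n+1$, whereas for $|\alpha|=|\beta|=2n+1$ the expansion of $\partial_x^{\alpha}\partial_y^{\beta}\bigl[\sigma(\phi(x,y),\xi)\bigr]$ genuinely contains terms $(\partial_w^{\alpha'}\partial_\xi^{\gamma}\sigma)(\phi(x,y),\xi)$ with $|\alpha'|$ as large as $|\alpha|+|\beta|=4n+2$ (take the term where every differentiation lands on $\sigma$, multiplied only by first-order derivatives of $\phi$). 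So \eqref{bound} by itself does not deliver \eqref{EQ:CV-assump}; to close the argument one needs boundedness of $\partial_x^{\beta}\partial_\xi^{\gamma}\sigma$ for $|\beta|\le 4n+2$, $|\gamma|\le 2n+1$, mirroring the order $4n+2$ already (correctly) required of $\tau$ in \eqref{EQ:taubound}. You should be aware that the paper's own statement and one-line proof carry the same imprecision, so your write-up inherits rather than introduces it; but since you made the counting explicit, the mismatch between $2n+1$ in \eqref{bound} and the $4n+2$ derivatives of $\sigma$ actually produced by the chain rule should be stated and repaired rather than asserted away.
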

\proof
It is enough to use the chain rule applied to $a(x,y,\xi):=\sigma(x+\tau(y-x),\xi)$ and observe that one can apply Theorem \ref{THM:CV} since conditions
\eqref{EQ:taubound} and \eqref{bound} imply \eqref{EQ:CV-assump}.
\endproof

\section{Appendix: symmetry functions}
\label{SEC:heisenberg}

In this section we show how $\tau$-quantizations appear naturally in the analysis of operators in noncommutative settings. The trivial choice of $\tau={\rm id}$ in \eqref{bilfred} corresponding to the Kohn-Nirenberg type quantization has been extensively studied in the setting of general graded Lie groups in \cite{FR1}. 

Quantizations with general measurable functions $\tau$ 
in the form
\eqref{bilfred}
have been studied in
\cite{MR}. Especially, as explained in the introduction, for the exponential groups the symmetry functions always exist and lead to quantizations having the property of the Weyl quantization. Such symmetry functions may be given by the formula \eqref{EQ:Mrsym}, that is, by
\begin{equation}\label{EQ:Mrsym2}
\tau(x)=\int_{0}^{1}\exp[s\log x]ds.
\end{equation} 
Consequently, with this choice of $\tau$, the analogue of the `midpoint' $m(x,y)$ between points $x$ and $y$ is given by the formula 
\begin{equation}\label{EQ:midpoint}
m(x,y)=x\tau(y^{-1}x)^{-1}.
\end{equation} 

In this appendix we work out explicit examples of such symmetry functions in simple noncommutative settings of the polarised and of the normal Heisenberg groups. In the latter case we also calculate the explicit form of the  `midpoint' function $m(x,y)$.  

\subsection{Polarised Heisenberg group}

We will be using the matrix descriptions of the Heisenberg group and of its polarised version, see e.g. \cite[Section 6.1]{FR1} for more details on various descriptions of these groups.

Thus, we first consider the example of the polarised Heisenberg group $\mathbb{H}_{pol}$, which may be identified with the space of matrices 
$$
x=(a,b,c)\qquad\longleftrightarrow\qquad  \left[ {\begin{array}{ccc}
   1 & a & c \\
   0 & 1 & b \\
   0 & 0 & 1
  \end{array} } \right],
$$
with the Lie algebra $\mathfrak{H}_{pol}$ given by
$$
  \left[ {\begin{array}{ccc}
   0 & \tilde a & \tilde c \\
   0 & 0 & \tilde b \\
   0 & 0 & 0
  \end{array} } \right],
$$
where $a,b,c, \tilde a,\tilde b,\tilde c\in\mathbb{R}$.
Indeed, since the group $\mathbb{H}_{pol}$ is nilpotent (and hence also of exponential type), one can move from $\mathfrak{H}_{pol}$ to $\mathbb{H}_{pol}$ using the exponential function:
$$\exp
  \left[ {\begin{array}{ccc}
   0 & \tilde a & \tilde c \\
   0 & 0 & \tilde  b \\
   0 & 0 & 0
  \end{array} } \right]=\sum_{k=0}^{\infty}\frac{1}{k!}
  \left[ {\begin{array}{ccc}
   0 & \tilde a & \tilde c \\
   0 & 0 & \tilde b \\
   0 & 0 & 0
  \end{array} } \right]^k=
  \left[ {\begin{array}{ccc}
   1 & \tilde a & \tilde c +\frac{1}{2}\tilde a\tilde b\\
   0 & 1 & \tilde b \\
   0 & 0 & 1
  \end{array} } \right].
$$
It is also possible to do the reverse operation, in which case we have
$$\log
  \left[ {\begin{array}{ccc}
   1 & a & c \\
   0 & 1 & b \\
   0 & 0 & 1
     \end{array} } \right]
  =
  \left[ {\begin{array}{ccc}
   1 & a & c -\frac{1}{2}ab\\
   0 & 1 & b \\
   0 & 0 & 1
  \end{array} } \right].
$$
According to \eqref{EQ:Mrsym} a symmetry function on the polarised Heisenberg group $\mathbb{H}_{pol}$ can be given by the formula 
$$\tau(x)=\int_{0}^{1}\exp[s\log x]ds.$$
In particular, for $x=(a,b,c)$, we can calculate it explicitly:
\begin{eqnarray*}
\tau(x) & = & \int_{0}^{1}\exp\left[s\log \left[ {\begin{array}{ccc}
   1 & a & c \\
   0 & 1 & b \\
   0 & 0 & 1
     \end{array} } \right]\right]ds
     \\
     &=&\int_{0}^{1}\exp s \left[ {\begin{array}{ccc}
   0 & a & c-\frac{1}{2}ab \\
   0 & 0 & b \\
   0 & 0 & 0
     \end{array} } \right]ds\\
     &=&\int_{0}^{1}\exp\left[ {\begin{array}{ccc}
   0 & sa & sc-s\frac{1}{2}ab \\
   0 & 0 & sb \\
   0 & 0 & 0
     \end{array} } \right]ds\\
     &=&\int_{0}^{1}\left[ {\begin{array}{ccc}
   1 & sa & sc-s\frac{1}{2}ab+s^2\frac{1}{2}ab \\
   0 & 1 & sb \\
   0 & 0 & 1
     \end{array} } \right]ds\\
     &=&\left\{\left[ {\begin{array}{ccc}
   s & \frac{s^2}{2}a & \frac{s^2}{2}c-\frac{s^2}{4}ab+\frac{s^3}{6}ab \\
   0 & s & \frac{s^2}{2}b \\
   0 & 0 & s
     \end{array} } \right]\right\}_{0}^{1}\\
     &=&
     \left[ {\begin{array}{ccc}
   1 & \frac{1}{2}a & \frac{1}{2}c-\frac{1}{12}ab\\
   0 & 1 & \frac{1}{2}b \\
   0 & 0 & 1
     \end{array} } \right].
\end{eqnarray*}
Identifying the points of $\mathbb{H}_{pol}$ with points in $\mathbb{R}^3$, we get the formula
\begin{equation}\label{EQ:tau-heis}
\mathbb{H}_{pol}\ni x=(a,b,c)\mapsto \tau(x)=\left(\frac{a}{2},\frac{b}{2},\frac{c}{2}-\frac{ab}{12}\right).
\end{equation}  
According to the discussion in the introduction, it follows from \cite[Proposition 4.3]{MR} that the $\tau$-quantization with $\tau$ given by \eqref{EQ:tau-heis} would play the role of the Weyl quantization on the Heisenberg group in the sense that $\tau$-quantized operators with self-adjoint symbols would be also self-adjoint. Here we can note an interesting twist in the last variable on $\mathbb{H}_{pol}$ while it remains being the mid-point in the variables of the first stratum.

\subsection{Heisenberg group}

In a similar way we can do the same computations for the Heisenberg group which we will denote by $\mathbb{H}$, and its Lie algebra by $\mathfrak{H}$.
This group is given by the triples $(a,b,c)\in\mathbb{R}^{3}$ that verify the group law
$$
(a,b,c)\cdot(u,v,s)=(a+u,b+v,c+s+\frac{1}{2}(ub-va)).
$$
It is also possible to identify any element $(a,b,c)\in\mathbb{H}$ with an upper triangular matrix
via the association
$$x=(a,b,c)\qquad\longleftrightarrow\qquad \left[ {\begin{array}{ccc}
   1 & a & \frac{ab}{2}+c\\
   0 & 1 & b \\
   0 & 0 & 1
     \end{array} } \right].$$
     Similarly the elements of the Lie algebra of the Heisenberg group are given via the association
     $$(\tilde a,\tilde b,\tilde c)\qquad\longleftrightarrow\qquad \left[ {\begin{array}{ccc}
   0 & \tilde a & \tilde c\\
   0 & 0 & \tilde b\\
   0 & 0 & 0
     \end{array} } \right].$$
     As before it is possible to pass from the Lie algebra $\mathfrak{H}$ to the group $\mathbb{H}$ via the exponential mapping
     $$\exp\left[ {\begin{array}{ccc}
   0 & \tilde a & \tilde c\\
   0 & 0 & \tilde b \\
   0 & 0 & 0
     \end{array} }\right]=\sum_{k=0}^{\infty}\left[ {\begin{array}{ccc}
   0 & \tilde a & \tilde c\\
   0 & 0 & \tilde b \\
   0 & 0 & 0
     \end{array} }\right]^k=\left[ {\begin{array}{ccc}
   1 & \tilde a & \frac{\tilde a\tilde b}{2}+\tilde c\\
   0 & 1 & \tilde b \\
   0 & 0 & 1
     \end{array} }\right].$$
     Therefore, the logarithmic function is given by
     $$\log\left[ {\begin{array}{ccc}
   1 & a & \frac{ac}{2}+c\\
   0 & 1& b \\
   0 & 0 & 1
     \end{array} }\right]=\left[ {\begin{array}{ccc}
   0 & a & c\\
   0 & 0& b \\
   0 & 0 & 0
     \end{array} }\right].$$
     According to \eqref{EQ:Mrsym}, for $x=(a,b,c)$, a symmetry function on the Heisenberg group takes the form     \begin{eqnarray*}
     \int_{0}^{1}\exp(s\log x)ds&=&\int_{0}^{1}\exp\left(s\log \left[ {\begin{array}{ccc}
   1 & a & \frac{ab}{2}+c\\
   0 & 1& b \\
   0 & 0 & 1
     \end{array} }\right]\right)ds\\&=&\int_{0}^{1}\exp\left(s \left[ {\begin{array}{ccc}
   0 & a & c\\
   0 & 0& b \\
   0 & 0 & 0
     \end{array} }\right]\right)ds\\
     &=&\int_{0}^{1}\exp\left( \left[ {\begin{array}{ccc}
   0 & sa &sc\\
   0 & 0& sb \\
   0 & 0 & 0
     \end{array} }\right]\right)ds\\
     &=&\int_{0}^{1}\left[ {\begin{array}{ccc}
   1 & sa & \frac{s^2ab}{2}+sc\\
   0 & 1& sb \\
   0 & 0 & 1
     \end{array} }\right]ds
 \\
     &=&\left[ {\begin{array}{ccc}
   1 & \frac{a}{2} & \frac{ab}{6}+\frac{c}{2}\\
   0 & 1& \frac{b}{2} \\
   0 & 0 & 1
     \end{array} }\right].
       \end{eqnarray*}  
       Thus, a symmetry function can be given by the formula
\begin{equation}\label{EQ:tau-heis2}
\mathbb{H}\ni x=(a,b,c)\mapsto \tau(x)=\left(\frac{a}{2},\frac{b}{2},\frac{c}{2}+\frac{ab}{6}\right).
\end{equation}  
Furthermore, we have 
$$     
      \tau[(a,b,c)^{-1}]=\tau[(-a,-b,-c)]
      =\left(-\frac{a}{2},-\frac{b}{2},-\frac{c}{2}+\frac{ab}{6}\right).
$$
Thus, if $x=(a_1,b_1,c_1)$ and $y=(a_2,b_2,c_2)$, we get
\begin{eqnarray*}
\tau(y^{-1}x) &= &\tau((a_1-a_2,b_1-b_2,c_1-c_2+\frac12(b_1a_2-b_2a_1)))\\
& = & \left(\frac{a_1-a_2}{2},\frac{b_1-b_2}{2},\frac{c_1-c_2+\frac12(b_1a_2-b_2a_1)}{2}
+\frac{(a_1-a_2)(b_1-b_2)}{6}
\right) \\
& = &
\left(\frac{a_1-a_2}{2},\frac{b_1-b_2}{2},\frac{c_1-c_2}{2}+\frac{b_1a_2-b_2a_1}{4}
+\frac{(a_1-a_2)(b_1-b_2)}{6}\right).
\end{eqnarray*}
Consequently, we get the formula for the `midpoint' function
from \eqref{EQ:midpoint} as
\begin{eqnarray*}
m(x,y) & = & x\tau(y^{-1}x)^{-1} \\
&= &(a_1,b_1,c_1) \times \\
&&\; \times
\left(-\frac{a_1-a_2}{2},-\frac{b_1-b_2}{2},-\frac{c_1-c_2}{2}-\frac{b_1a_2-b_2a_1}{4}
-\frac{(a_1-a_2)(b_1-b_2)}{6}\right) \\
&=& \left(\frac{a_1+a_2}{2},\frac{b_1+b_2}{2},\frac{c_1+c_2}{2}
-\frac{(a_1-a_2)(b_1-b_2)}{6}
\right).
\end{eqnarray*}
In particular, we observe that if $a_1=a_2$ or if $b_1=b_2$, then
$$
m((a_1,b_1,c_1),(a_2,b_2,c_2))= \left(\frac{a_1+a_2}{2},\frac{b_1+b_2}{2},\frac{c_1+c_2}{2}
\right),
$$
but an additional quadratic twist appears in the central variable  if $a_1\not=a_2$ and $b_1\not=b_2$.

\end{document}